\begin{document}

\newtheorem{theorem}{Theorem}[section]
\newtheorem{prop}[theorem]{Proposition}
\newtheorem{lemma}[theorem]{Lemma}
\newtheorem{sublemma}[theorem]{Sublemma}
\newtheorem{cor}[theorem]{Corollary}
\newtheorem{defn}[theorem]{Definition}
\newtheorem{conj}[theorem]{Conjecture}
\newtheorem{rmk}[theorem]{Remark}
\newtheorem{qn}[theorem]{Question}
\newtheorem{claim}[theorem]{Claim}
\newtheorem{defth}[theorem]{Definition-Theorem}

\newcommand{\boundary}{\partial}
\newcommand{\C}{{\mathbb C}}
\newcommand{\U}{{\mathbb U}}
\newcommand{\Hyp}{{\mathbb H}}
\newcommand{\D}{{\mathbb D}}
\newcommand{\Z}{{\mathbb{Z}}}
\newcommand{\R}{{\mathbb R}}
\newcommand{\Q}{{\mathbb Q}}
\newcommand{\E}{{\mathbb E}}
\newcommand{\ints}{{\mathbb{Z}}}
\newcommand{\natls}{{\mathbb N}}
\newcommand{\ratls}{{\mathbb Q}}
\newcommand{\rls}{{\mathbb R}}
\newcommand{\proj}{{\mathbb P}}
\newcommand{\lhp}{{\mathbb L}}
\newcommand{\tube}{{\mathbb T}}
\newcommand{\cusp}{{\mathbb P}}
\newcommand\A{{\mathbb A}}
\newcommand\BB{{\mathcal B}}
\newcommand\CC{{\mathcal C}}
\newcommand\DD{{\mathbb D}}
\newcommand\EE{{\mathcal E}}
\newcommand\FF{{\mathcal F}}
\newcommand\GG{{\mathcal G}}
\newcommand\HH{{\mathcal H}}
\newcommand\II{{\mathcal I}}
\newcommand\JJ{{\mathcal J}}
\newcommand\KK{{\mathcal K}}
\newcommand\LL{{\mathcal L}}
\newcommand\MM{{\mathcal M}}
\newcommand\NN{{\mathcal N}}
\newcommand\OO{{\mathcal O}}
\newcommand\PP{{\mathcal P}}
\newcommand\QQ{{\mathcal Q}}
\newcommand\RR{{\mathcal R}}
\newcommand\SM{{\mathbb A}}
\newcommand\Sp{{\mathbb S}}
\newcommand\TT{{\mathcal T}}
\newcommand\UU{{\mathcal U}}
\newcommand\VV{{\mathcal V}}
\newcommand\WW{{\mathcal W}}
\newcommand\XX{{\mathcal X}}
\newcommand\YY{{\mathcal Y}}
\newcommand\ZZ{{\mathcal{Z}}}
\newcommand\CH{{\CC\HH}}
\newcommand\PEY{{\PP\EE\YY}}
\newcommand\MF{{\MM\FF}}
\newcommand\RCT{{{\mathcal R}_{CT}}}
\newcommand\RCTT{{{\mathcal R}^2_{CT}}}
\newcommand\PMF{{\PP\kern-2pt\MM\FF}}
\newcommand\FL{{\FF\LL}}
\newcommand\PML{{\PP\kern-2pt\MM\LL}}
\newcommand\GL{{\GG\LL}}
\newcommand\Pol{{\mathcal P}}
\newcommand\half{{\textstyle{\frac12}}}
\newcommand\Half{{\frac12}}
\newcommand\Mod{\operatorname{Mod}}
\newcommand\Area{\operatorname{Area}}
\newcommand\ep{\epsilon}
\newcommand\hhat{\widehat}
\newcommand\Proj{{\mathbf P}}
\newcommand\til{\widetilde}
\newcommand\length{\operatorname{length}}
\newcommand\tr{\operatorname{tr}}
\newcommand\gesim{\succ}
\newcommand\lesim{\prec}
\newcommand\simle{\lesim}
\newcommand\simge{\gesim}
\newcommand{\simmult}{\asymp}
\newcommand{\simadd}{\mathrel{\overset{\text{\tiny $+$}}{\sim}}}
\newcommand{\ssm}{\setminus}
\newcommand{\diam}{\operatorname{diam}}
\newcommand{\pair}[1]{\langle #1\rangle}
\newcommand{\T}{{\mathbf T}}
\newcommand{\inj}{\operatorname{inj}}
\newcommand{\pleat}{\operatorname{\mathbf{pleat}}}
\newcommand{\short}{\operatorname{\mathbf{short}}}
\newcommand{\vertices}{\operatorname{vert}}
\newcommand{\collar}{\operatorname{\mathbf{collar}}}
\newcommand{\bcollar}{\operatorname{\overline{\mathbf{collar}}}}
\newcommand{\I}{{\mathbf I}}
\newcommand{\tprec}{\prec_t}
\newcommand{\fprec}{\prec_f}
\newcommand{\bprec}{\prec_b}
\newcommand{\pprec}{\prec_p}
\newcommand{\ppreceq}{\preceq_p}
\newcommand{\sprec}{\prec_s}
\newcommand{\cpreceq}{\preceq_c}
\newcommand{\cprec}{\prec_c}
\newcommand{\topprec}{\prec_{\rm top}}
\newcommand{\Topprec}{\prec_{\rm TOP}}
\newcommand{\fsub}{\mathrel{\scriptstyle\searrow}}
\newcommand{\bsub}{\mathrel{\scriptstyle\swarrow}}
\newcommand{\fsubd}{\mathrel{{\scriptstyle\searrow}\kern-1ex^d\kern0.5ex}}
\newcommand{\bsubd}{\mathrel{{\scriptstyle\swarrow}\kern-1.6ex^d\kern0.8ex}}
\newcommand{\fsubeq}{\mathrel{\raise-.7ex\hbox{$\overset{\searrow}{=}$}}}
\newcommand{\bsubeq}{\mathrel{\raise-.7ex\hbox{$\overset{\swarrow}{=}$}}}
\newcommand{\tw}{\operatorname{tw}}
\newcommand{\base}{\operatorname{base}}
\newcommand{\trans}{\operatorname{trans}}
\newcommand{\rest}{|_}
\newcommand{\bbar}{\overline}
\newcommand{\UML}{\operatorname{\UU\MM\LL}}
\newcommand{\EL}{\mathcal{EL}}
\newcommand{\tsum}{\sideset{}{'}\sum}
\newcommand{\tsh}[1]{\left\{\kern-.9ex\left\{#1\right\}\kern-.9ex\right\}}
\newcommand{\Tsh}[2]{\tsh{#2}_{#1}}
\newcommand{\qeq}{\mathrel{\approx}}
\newcommand{\Qeq}[1]{\mathrel{\approx_{#1}}}
\newcommand{\qle}{\lesssim}
\newcommand{\Qle}[1]{\mathrel{\lesssim_{#1}}}
\newcommand{\simp}{\operatorname{simp}}
\newcommand{\vsucc}{\operatorname{succ}}
\newcommand{\vpred}{\operatorname{pred}}
\newcommand\fhalf[1]{\overrightarrow {#1}}
\newcommand\bhalf[1]{\overleftarrow {#1}}
\newcommand\sleft{_{\text{left}}}
\newcommand\sright{_{\text{right}}}
\newcommand\sbtop{_{\text{top}}}
\newcommand\sbot{_{\text{bot}}}
\newcommand\sll{_{\mathbf l}}
\newcommand\srr{_{\mathbf r}}
\newcommand\geod{\operatorname{\mathbf g}}
\newcommand\mtorus[1]{\boundary U(#1)}
\newcommand\AF{\mathbf A}
\newcommand\Aleft[1]{\A\sleft(#1)}
\newcommand\Aright[1]{\A\sright(#1)}
\newcommand\Atop[1]{\A\sbtop(#1)}
\newcommand\Abot[1]{\A\sbot(#1)}
\newcommand\boundvert{{\boundary_{||}}}
\newcommand\storus[1]{U(#1)}
\newcommand\Momega{\omega_M}
\newcommand\nomega{\omega_\nu}
\newcommand\twist{\operatorname{tw}}
\newcommand\modl{M_\nu}
\newcommand\MT{{\mathbb T}}
\newcommand\Teich{{\mathcal T}}
\newcommand{\etalchar}[1]{$^{#1}$}
\newcommand{\noi}{\noindent}
\newcommand{\ra}{\rightarrow}
\newcommand{\ua}{\uparrow}
\newcommand{\lra}{\longrightarrow}
\newcommand{\dra}{\downarrow}
\newcommand{\bo}{\textbf}
\newcommand{\spa}{\vspace{.25in}}
\newcommand{\sing}{\mathit{Sing}}

\renewcommand{\Re}{\operatorname{Re}}
\renewcommand{\Im}{\operatorname{Im}}

\title[Nash Equilibria]{Nash Equilibria via Duality and Homological Selection}

\author[Arnab Basu]{Arnab Basu}

\address{Quantitative Methods and Information Systems Area,
Indian Institute of Management Bangalore-560076, India}

\email{arnab.basu@iimb.ernet.in}

\author[Samik Basu]{Samik Basu}

\address{RKM Vivekananda University, Belur Math, WB-711 202, India}

\email{samik.basu2@gmail.com; samik@rkmvu.ac.in}

\author[Mahan Mj]{Mahan Mj}

\address{RKM Vivekananda University, Belur Math, WB-711 202, India}

\email{mahan.mj@gmail.com; mahan@rkmvu.ac.in}

\subjclass[2010]{Primary: 55M05, 55N45, 91A10}

\keywords{Nash Equilibria, Dold-Thom Theorem, Homological selection}

\date{\today}

\thanks{Research of  first author  supported partly by DST GoI Grant no: SR/FTP/MS-08/2009 and partly by IIM Bangalore Research Chair Grant no: 190. Research of  third author  partly supported by  CEFIPRA Indo-French Research Grant 4301-1. }

\begin{abstract} Given a multifunction from $X$ to the $k-$fold symmetric product
$Sym_k(X)$, we use the Dold-Thom Theorem to establish a homological  selection Theorem.
This is used to establish existence of Nash equilibria.
Cost functions  in  problems concerning the existence of
Nash Equilibria are traditionally multilinear in the mixed strategies. The main aim of this paper is to relax the hypothesis of 
 multilinearity.  We use  basic intersection theory, Poincar\'e Duality in addition to  the Dold-Thom Theorem.
\end{abstract}

\maketitle

\tableofcontents

\section{Introduction} The main topological problem addressed in this paper is the following: \\ 

Let $X$ be a metric space and $Sub_k(X)$ denote the collection of subsets 
of $X$ with at most $k$ points equipped with the Hausdorff metric.  Let $Sym_k(X)$ denote the $k-$fold symmetric product of $X$, also called the configuration space
of $k$ points (counted with multiplicity) in $X$.  Given a multifunction $s : X \rightarrow Y$, where $Y$ is  $Sub_k(X)$ or  $Sym_k(X)$,
 does there exist a "homological" (in a  sense to be made precise) selection? \\
A related problem addressed in the paper is: \\
Can one lift a map  $Z \rightarrow Sub_k(X)$ to a map   $Z \rightarrow Sym_k(X)$? 

This topological  problem is used to establish existence of Nash equilibria by specializing $X$ to a simplex. 
The property of ”admitting homological selection” (Definition \ref{homseldef}), in a similar context of establishing existence equilibria in certain games,
has been already considered under the name ”spanning property” (or ”property $\mathcal S$”) in \cite{spiez1}  (see
also \cite{spiez2, spiez3}).

\subsection{Preliminaries}
We  shall work in the framework of
non-cooperative games with mixed strategies, where the domain for every player is a finite dimensional
simplex. 
The  cost functions (or alternately payoff functions) in traditional problems of
Nash Equilibria are multilinear in the mixed strategies. The main aim of this paper is to relax the hypothesis that 
 cost functions  are multilinear. The original proof of existence of Nash Equilibria \cite{nash} uses fairly
 simple  Algebraic Topology, namely
 the Brouwer or Kakutani Fixed Point Theorems. This approach has been refined in various ways \cite{lh, gale, km}
(see also \cite{hs} for an effective approach using a minimax technique).
Our approach in this paper is quite different inasmuch as
we use  standard  but  more sophisticated tools  from Algebraic Topology
to establish  existence of Nash Equilibria  under considerably more general conditions on cost functions.
The tools we use are basic intersection theory, Poincar\'e Duality and the Dold-Thom Theorem. We use the 
 Dold-Thom Theorem to prove the existence of  certain relative cycles contained in graphs of multifunctions. 
The chains thus constructed may be thought of as homological versions of selections \cite{michael1, michael2, michael3}.
This furnishes us with a Homological Selection Theorem \ref{multisel}, which might be of independent interest.

The conditions we use on the cost functions are detailed in Section \ref{hyp}. We give a brief sketch here 
for a 2-player non-cooperative game to stress
the soft topological nature of the hypotheses used. Consider  a game between players 1 and 2 with mixed strategy spaces $\A_1, \A_2$.
We use the notation $\A_{-1}=\A_2$ and $\A_{-2}=A_1$.

A cost function $r_i(x,y)$ for player $i$ ($i=1,2$)
is said to be configurable if\\
1) the set of local minima of the best response multifunction $R_i(y)$ (for each $y\in \A_{-i}$)
of the function $r_i(x,y)$  ($x\in \A_{i}$) is finite; and \\
2) Counted with multiplicity, the set of local minima $R_i(y)$ is  continuous on $ \A_{-i}$.\\
A weakly configurable cost function is one whose set of local minima can be arbitrarily well-approximated by  
the set of local minima of a  configurable  function.

Two kinds of hypotheses will be relevant in this paper. \\
\noi \textbf{(A1)} For each $i$, the map $R_i$ is continuous.\\
\noi \textbf{(A2)} For each $i$, the map $R_i$ is weakly configurable.\\

The motivation for the assumption \textbf{(A2)} is given in Section \ref{hyp}.
The main Theorem of this paper (Theorem \ref{existNash}) proves that assumption \textbf{(A2)} is sufficient
to guarantee  the existence of Nash equilibria.
In Section \ref{ctreg} we shall give an example to show that assumption \textbf{(A1)} is not sufficient
to guarantee  the existence of Nash equilibria. We describe this counterexample in brief. It is easy to see that
the multifunction $f(z) = \pm \sqrt{z}$ from the unit disk $\Delta$ in the complex plane to itself has no continuous selection. However
the graph $gr(f)(\subset \Delta \times \Delta)$ does support a non-zero relative cycle in $H_2(\Delta \times \Delta, \partial \Delta \times \Delta)$
and hence admits a {\it homological selection} in our terminology (see Section \ref{hom}).
The counterexample in Section \ref{ctreg1}, which is a continuous map from $\DD^2$ to $Sub_3(\DD^2)$,
 shows that   a continuous multifunction need not admit
 even a  homological selection.  (Here $Sub_3(\DD^2)$ denotes the collection of subsets of $\DD^2$ with at most 3 points equipped with
the Hausdorff metric.) 

\subsection{Nash Equilibria} We refer to \cite{basar} for the basics of Game Theory.
 An $N$-person non-cooperative game is determined by $2N$ objects
$(\A_1,\ldots,\A_N,r_1,\ldots,r_N)$ where $\A_i$ denotes the
{\it strategy space} of player $i$ and
$r_i : \A \stackrel{def}{=} \A_1 \times \cdots \times \A_N \ra \R$ is the cost function
for player $i$. We shall call $\A$ the {\it total strategy space}.
Each $\A_i$ will, for the purposes of this paper,
 be the space of probability measures on a finite set of cardinality $(n_i+1)$
and hence homeomorphic to $\DD^{n_i}$. Thus, in Game Theory terminology each element
of $\A_i$ is a {\it mixed strategy} or equivalently a  probability measure on a finite set. Thus,  $\A_i$ is 
 the space of mixed strategies or player $i$. Vertices of the simplex  $\A_i$ are also referred to as {\it pure strategies}.
A mixed strategy may therefore be regarded as a {\it probability vector} with   $(n_i+1)$ components. 
 Each player $i$ independently chooses his strategy  $a_i \in \A_i$. For 
a strategy-tuple $a = (a_1, \cdots, a_N)$, player $i$
 pays an immediate cost $r_i(a)$. 

\smallskip

\begin{center}

 {\bf Goal:}
Each player wants to minimize his cost.

\end{center}

\bigskip

\noindent {\bf Notation:} We denote 
$\A_{-i} \stackrel{def}{=} \A_1 \times \cdots \A_{i-1} \times \A_{i+1} \times \A_N$ for all $i$.
For any strategy-tuple $(a_1, \cdots, a_N) \in \A$, $a_{-i} \stackrel{def}{=} (a_1,\ldots,
a_{i-1},a_{i+1},\ldots,a_N) \in \A_{-i}$. Also for convenience of notation we change the order of the variables
and assume that $r_i: \A_i \times \A_{-i}\ra \R$ is the i-th cost function.

\begin{defn} \label{Nash}
A strategy-tuple $a^* = (a_1^*,\ldots,a_N^*) \in \A$ is called a Nash equilibrium if
\begin{eqnarray}
&& {r}_i^* \stackrel{def}{=} {r}_i(a^*) \leq {r}_i(a_i,a_{-i}^*),\ \forall a_i \in \A_i,\ i =1,\ldots,N,
\end{eqnarray}
where, 
\end{defn}

Given a metric space $(X,d)$, let $\HH_c(X)$ denote the space of 
all compact subsets of $X$ equipped with the Hausdorff metric $d_H$ i.e, for all $A,B \in \HH_c(X)$,
\begin{equation}\label{hausmetric}
d_{H}(A,B) \stackrel{def}{=} \max\{\sup_{x\in A}\inf_{y \in B} d(x,y), \sup_{y \in B}\inf_{x \in A} d(x,y)\}.
\end{equation}

\begin{defn} \label{optresp}
For any player $i$, given a strategy-tuple $a_{-i} \in \A_{-i}$ of the other players, we define his best-response $R_i(a_{-i})$ as follows
\begin{equation}
R_i(a_{-i}) \stackrel{def}{=} {\arg\min}_{a_i \in \A_i} {r}_i(a_i,a_{-i}) \in \HH_c(\A_i).
\end{equation}
\end{defn}

Here ${\arg\min}$ denotes the {\it argument} of the minima set, i.e. the set of values $x\in \A_i$ such that
${r}_i(x,a_{-i})$ is minimum as  a function of $x$.

\smallskip

\noindent {\bf Note:} It will not affect our arguments at all if we take 
${\arg\min}$ to denote the  argument of the {\it local} minima set, i.e. the set of values $x\in \A_i$ such that
${r}_i(x,a_{-i})$ is a local minimum as  a function of $x$
provided we modify Definition \ref{Nash} appropriately. In such a case, we are looking at {\it local
Nash equilibria} which are defined by demanding that the ${r}_i^*$ appearing
in Definition \ref{Nash} are {\it local minima} rather than global minima as defined there.

\begin{defn} \label{optrespgr}
For any $i$, the graph $\mbox{gr}(R_i)$ of $R_i$ is defined as
\begin{equation}
\mbox{gr}(R_i) \stackrel{def}{=} \{(a_{-i},a_i): a_i \in R_i(a_{-i}),\  a_{-i} \in \A_{-i}\}.
\end{equation}
\end{defn}

\subsection{Motivational Example: Nash Equilibria for Bi-matrix Games}
We motivate our main Theorem by giving a simplified version of
our proof of the existence of Nash equilibria in the special case of standard 2-person bimatrix games. 

Consider a two-player bimatrix game where each player $i$ has strategy space $\A_i \stackrel{def}{=} \{(x_i,1-x_i): 0 \leq x_i \leq 1\}$, for $i=1, 2$.
 The cost function for player $i$ is a bilinear function of the form $r_i(x_1, x_2)=x_1^TM_ix_2$, where $M_1, M_2$ are $2 \times 2$ matrices and $x_1, x_2$
are probability vectors (mixed strategies) for players $1, 2$.

Then the best response $R_i(x_j)$ ($i \neq j$) of player $i$ when player $j$ chooses $x_j \in [0,1]$ is  given by
\begin{equation} \label{optrespeq}
R_i(x_j) = {\arg\min}_{x_i \in [0,1]} {r}_i(x_i,x_j),
\end{equation}
which is a {\it singleton} set $\{x_i^*(x_j)\}$ due to the component-wise linearity of the payoff function. Moreover,  the dependence of
$x_i^*$ on $x_j$ is continuous. Hence $\mbox{gr}(R_i)$ for $i=1, 2$ are continuous paths in $[0,1]\times [0,1]$
where  $\mbox{gr}(R_1)$ is a path from the bottom ($[0,1] \times \{ 0 \}$) to the top  ($[0,1] \times \{ 1 \}$) 
and   $\mbox{gr}(R_2)$  is a path from the left($\{ 0 \} \times [0,1]$) to the right  ($\{ 1 \} \times [0,1]$).
Hence $\mbox{gr}(R_1)$ and  $\mbox{gr}(R_2)$  must intersect (for a formal proof of this intuitively clear fact,
see  Lemma 2 of \cite{maehara} for instance). Hence
there exists $(x_1^*, x_2^*)$  such that $(x_1^* = R_1(x_2^*), x_2^* = R_2[x_1^*])$ i.e. a Nash equilibrium.

\subsection{Ordered and Unordered Tuples of Points}

Let $\DD^d \stackrel{def}{=} \{(x_1,\ldots,x_{d}) \in \R^{d}: \sum_{j=1}^{d} x_j^2 \leq 1\}$ denote the $d$-dimensional disk. Given a set $A$, let \#$A$ denote its cardinality. For positive integers $k$ and compact metric spaces $X$ we define  the topological space of subsets of cardinality $\leq k$,
$$Sub_k(X) \stackrel{def}{=} \lbrace subsets\, of \, X \, of \, cardinality\, \leq k \rbrace.$$
This has a topology induced as a subset of $\HH_c(X)$. An analogous construction is the configuration space,
\begin{equation}
C_k(X) \stackrel{def}{=} \{(x_1,\ldots,x_k) \in X^k\} / \Sigma_k
\end{equation}
where $\Sigma_k$ is the symmetric group on $k$ symbols. The elements of $C_k(X)$ can be written multiplicatively $x_1^{i_1}\ldots x_j^{i_j}$ with $i_1+\ldots +i_j =k$ or as equivalence classes $[x_1,\ldots , x_k]$. There is a metric on $C_k(X)$ given by 
$$d_k([x_1,\ldots , x_k] , [y_1,\ldots , y_k])= min_{\sigma \in \Sigma_k} ( sup_i (d(x_{\sigma (i)},y_i)) ).$$
We have a map $q_k: C_k(X) \rightarrow Sub_k(X)$ that maps $[x_1,\ldots , x_k]$ to $\lbrace x_1, \ldots , x_k \rbrace$. It  is a surjective continuous map (as $d_H (\lbrace x_1,\ldots , x_k \rbrace , \lbrace y_1, \ldots y_k \rbrace) \leq d_k([x_1,\ldots , x_k] , [y_1,\ldots , y_k])$). 

\begin{prop}
The quotient topology on $Sub_k(X)$ induced from the map $q_k$ is the same as the topology induced by the metric $d_H$. \label{subconf}
\end{prop}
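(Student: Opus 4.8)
The plan is to prove the equality of the two topologies by verifying the two inclusions separately: one is immediate from the continuity already recorded above, and the other rests on a compactness argument. Write $\tau_H$ for the metric topology on $Sub_k(X)$ induced by $d_H$, and $\tau_q$ for the quotient topology induced by $q_k$.

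First I would dispose of one inclusion. Since $q_k \colon (C_k(X),d_k) \to (Sub_k(X),d_H)$ is continuous (via the inequality $d_H(\{x_1,\ldots,x_k\},\{y_1,\ldots,y_k\}) \leq d_k([x_1,\ldots,x_k],[y_1,\ldots,y_k])$ noted above), and since $\tau_q$ is by definition the finest topology on $Sub_k(X)$ making $q_k$ continuous, any topology making $q_k$ continuous is contained in $\tau_q$. In particular $\tau_H \subseteq \tau_q$: if $U$ is $d_H$-open then $q_k^{-1}(U)$ is open in $C_k(X)$, so $U$ is $\tau_q$-open.

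For the reverse inclusion I would show that $q_k$ is in fact a \emph{quotient map} onto $(Sub_k(X),d_H)$, which is exactly the assertion $\tau_q = \tau_H$. The route is the standard lemma that a continuous surjection from a compact space to a Hausdorff space is closed, together with the fact that a continuous closed surjection is a quotient map. Hausdorffness of $(Sub_k(X),d_H)$ is automatic, being a metric space. For compactness of $(C_k(X),d_k)$, observe that the projection $p \colon X^k \to C_k(X)$ satisfies $d_k(p(x),p(y)) \leq \sup_i d(x_i,y_i)$, so $p$ is $1$-Lipschitz, hence continuous; as $X$ is a compact metric space, $X^k$ is compact, and therefore $C_k(X) = p(X^k)$ is compact. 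Consequently $q_k$ is a continuous closed surjection from a compact space to a Hausdorff space, hence a quotient map, which gives $\tau_q \subseteq \tau_H$ and finishes the proof.

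The only point needing care is the compactness of $(C_k(X),d_k)$, namely confirming that the metric $d_k$ induces the topology for which $p$ is continuous; this is exactly what the Lipschitz estimate above secures, after which the argument is purely formal. I expect no serious obstacle here, since the substantive continuity inequality $d_H \leq d_k$ has already been established, and the remaining content is just the compact-to-Hausdorff quotient-map lemma applied to it.
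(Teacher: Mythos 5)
Your proof is correct, but it takes a genuinely different route from the paper's. You prove the nontrivial inclusion ($\tau_q \subseteq \tau_H$) by a soft argument: $(C_k(X),d_k)$ is compact (being the $1$-Lipschitz image of the compact space $X^k$), $(Sub_k(X),d_H)$ is Hausdorff, and a continuous surjection from a compact space to a Hausdorff space is closed, hence a quotient map; all the pieces check out, including the Lipschitz estimates $d_k \circ (p\times p) \leq \sup_i d$ and $d_H \leq d_k$ that make the two maps continuous. The paper instead argues by hand: given $U$ with $q_k^{-1}(U)$ open and $p \in U$, it splits into the cases $\#(p)=k$ and $\#(p)<k$, and in the deficient-cardinality case constructs an explicit matching $\Lambda \colon q \to p$ of nearby configurations to show a $d_H$-ball around $p$ lies in $U$. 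The trade-off: your argument is shorter and essentially formal, but it genuinely uses compactness of $X$ (which the proposition's standing hypotheses do grant you, so no gap); the paper's $\ep$-argument never invokes compactness and so proves the statement for arbitrary metric spaces, and its local analysis of how configurations near a point of deficient cardinality distribute their points is exactly the picture reused later in the weight-collapsing arguments of the counterexample section.
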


\begin{proof}
The argument above already shows that if $U$ is an open set in the Hausdorff metric the inverse image is open and so it is open in the quotient topology. Conversely suppose $U$ is such that $q_k ^{-1}(U)$ is open. Then we must show that $U$ is open in the Hausdorff topology. Consider a point $p\in U$. If $\#(p)= k$ then $q_k^{-1}(p)$ has exactly one point which we also write as $p$. Also since $q_k ^{-1} (U)$ is open,  an open ball of $B(p,\delta)$ is contained in it. 

Let the minimum distance between points of $p$ be $\lambda$. We note that if  $z$ is at 
distance less than $\lambda / 2$ from $p$ then it has the same cardinality as $p$ and $d_k(p,z)=d_H (p,z)$. 
Therefore, if $r=min(\delta, \lambda / 2)$, the ball $B(p,r)$ is contained in $q_k^{-1} ( U)$. The ball in the Hausdorff metric of radius $r$ is equal to
$q_k(B(p,r))$ which is contained in $U$ since $r \leq \delta$, showing that $U$ contains an open neighborhood of $p$. 

Now we let $p$ have cardinality $< k$ so that 
$q_k^{-1}(p)$ has more than one point. Let $q_k^{-1}(p) = p_1,p_2,\ldots ,p_n$. Since $q_k^{-1}(U)$ is open, 
there exists $\ep > 0$ such that \\
(1) $q_k^{-1}(U)$ contains an $\ep$ neighborhood of each of the $p_i$ (we can choose this uniformly since $n$ is finite), and \\
(2) $2\ep$ is less than the minimum distance between points of $p$. 

Now suppose there exists $z$ in the $\ep$ neighborhood of $p$. 
The second condition ensures that a point in $z$ is $\ep$ close to a unique point of $p$ 
giving rise to
 a well-defined function $\Lambda:z \rightarrow p$. 
Hence, any weighted product of elements of $z$ so that the total weight is $k$ and
so that each $z$ point has a non zero weight, is $\ep$ close in the metric $d_k$ to the weighted product of elements  obtained 
by applying $\Lambda$ pointwise. 
Since every point in $p$ is $\ep$ close to at least one point of $z$, the element thus obtained maps to $p$. 
Thus, there is one $z_i \in q_k^{-1}(z) $ which is $\ep$ close to $p_i$, implying $q \in U$. Therefore $U$ contains an $\ep$ neighborhood of $p$. This completes the proof. 
\end{proof}

\subsection{Hypotheses for Existence of Equilibria}\label{hyp}

The next definition establishes the generality in which we shall work in this paper. 

\begin{defn} We say that a function 
 $f:\A_{-i}\ra \HH_c(\A_i)$ can be subset-lifted to $C_M(A_i)$ if there exists a continuous map $f^M :\A_{-i}\ra C_M(A_i)$ such that for every $a\in \A_{-i}$,
$$q^M(f^M(a))\subset f(a)\in \HH_c(\A_i).$$
\end{defn}

\begin{defn} A cost function  $r_i: \A_i \times \A_{-i}\ra \R$ is said to be {\bf configurable} if the response function
$R_i:\A_{-i} \rightarrow \HH_c(\A_i) $  can be subset-lifted to a map 
$R_i^M:\A_{-i} \rightarrow C_M(\A_i) $ to some configuration space $ C_M(\A_i)$. \\

A cost function  $r_i: \A_i \times \A_{-i}\ra \R$ is said to be {\bf weakly configurable} if for all $\ep > 0$ there exists
$R_{i,\ep}:\A_{-i} \rightarrow \HH_c(\A_i) $ such that 
\begin{enumerate} 
\item $R_{i,\ep}$ can be subset-lifted to a map 
$R_i^M:\A_{-i} \rightarrow C_M(\A_i) $ for some configuration space $ C_M(\A_i)$,
\item $d_H(R_i(a_{-i}) , R_{i,\ep}(a_{-i})) < \ep$ for all $a_{-i}\in A_{-i}$.
\end{enumerate}
\label{pol}
 \end{defn}

Equivalently, a cost function $r_i$ is configurable if\\
1) the set of local minima $R_i(a_{-i})$
of the function $r_i(a_{i},a_{-i})$ is finite for each $a_{-i}\in \A_{-i}$; and \\
2) Counted with multiplicity, the set of local minima $R_i(a_{-i})$ is  continuous on $ \A_{-i}$.\\
Also, a weakly configurable cost function is one whose set of local minima can be arbitrarily well-approximated by  
the set of local minima of a  configurable  function.

The notion of a weakly configurable cost function gives us greater flexibility than that of a configurable cost function.
It is possible that for a weakly configurable cost function, 
the set of local minima $R_i(a_{-i})$
of the function $r_i(a_{i},a_{-i})$ is infinite,  i.e. for the sequence $f^M$ of configurable approximants, $M \rightarrow \infty$.
The simplest example of $r_i: \A_i \times \A_{-i}\ra \R$ that is weakly configurable but not configurable, is a constant function 
or more generally a function that is independent of the first variable.

\medskip

There are two assumptions that will be relevant in this paper. \\
\noi \textbf{(A1)} For each $i$, the map $R_i$ is continuous in $a_{-i}$.\\
\noi \textbf{(A2)} For each $i$, the map $R_i$ is weakly configurable in $a_{-i}$.\\

The main aim of this paper is to prove the existence of Nash equilibria under the assumption \textbf{(A2)}.  
In Section \ref{ctreg} we shall give a counter-example to show that assumption \textbf{(A1)} is not sufficient
to guarantee  the existence of Nash equilibria.


\medskip

\noindent {\bf Motivation and Examples:}  The motivation for Definition \ref{pol} comes from Complex Algebraic Geometry.
Bezout's Theorem, which counts (with multiplicity) the number of intersections of projective hypersurfaces,
 has already been used in the context of Nash equilibria \cite{mcl}. The notion of complex Nash equilibria
has also been investigated by the authors of  \cite{mckl}. 

At the core of the Theorem that Nash equilibria exist 
under assumption \textbf{(A2)} (Theorem \ref{existNash}) lies an argument that derives from the algebraic topology principles that prove Bezout's
Theorem, viz. Poincar\'e duality and intersection theory of algebraically defined cycles.

The other main ingredient of the proof, viz. existence of multiselections, is also motivated by  Complex Algebraic Geometry.
Natural examples of  maps $f: X \rightarrow Y$ admitting multisections (counted with multiplicity)  occur as follows in the context of 
 Complex Algebraic Geometry:\\
Let $f: X \rightarrow Y$ be a surjective morphism of complex projective manifolds. Let $d = dim(X) - dim(Y)$.
One can cut $X$ by hyperplanes. Cutting $X$ with $d$ generic hyperplanes  furnishes
 a multisection (counted with multiplicity) of $f$. Thus, $F: Y \rightarrow {\mathcal{H}}_c (X)$ defined by $F(y) = f^{-1}(y)$ always admits
a subset-lift $F^M: Y \rightarrow C_M (X)$ for some $M$. We were led  to  Definition \ref{pol} by such  examples.


\section{Homological Selection}\label{hom}

In this section we will prove a result about the homology of  $gr(R_i)$  under the assumption (\textbf{A2}). The result will be used to prove the existence of Nash equilibria in the next section. We shall use in the following Homology with {\it integer} coefficients. Thus, $H_\ast(X)$ (resp. $H_\ast (X,A)$) will be used to denote $H_\ast(X; \mathbb{Z})$ (resp. $H_\ast (X,A; \mathbb{Z})$).

\smallskip

For a topological space $X$ with a basepoint $*$, we can use the basepoint to define a canonical inclusion of topological spaces $C_r(X)\hookrightarrow C_{r+1}(X)$ by the formula
$$x_1x_2\ldots x_r \mapsto x_1x_2\ldots x_r *$$
In this expression we are using the multiplicative notation defined earlier. The space $C_r(X)$ is also called the ($r$-fold) symmetric product and 
is often denoted by $SP_r(X)$ (as in \cite{hatcher}). 

It is interesting to form the union (direct limit) of the spaces $C_r(X)$, $C_\infty(X)$. This is called the infinite symmetric product and 
is denoted $SP_\infty(X)$. The space obtained in the union is the free commutative monoid on the space $X$ with the relation that the basepoint $*$ is the identity. 

In terms of this paper our interest in the space $C_\infty(X)$ is in the fact that the homotopy groups of $C_\infty(X)$ are the homology groups of $X$.
This is called the Dold-Thom theorem (\cite{hatcher} Section 4.K). Moreover the Hurewicz homomorphism from $\pi_*(X)\rightarrow H_*(X)$ is 
induced by the inclusion  
$$X= C_1(X)\hookrightarrow C_\infty(X)$$
on homotopy groups. 

The advantage of this space is that in order to show that a certain cycle is zero in homology, it suffices to establish a null-homotopy of the corresponding map into the configuration space. This is exactly what works in the proof of the following lemma. We are grateful to A. Dranishnikov for telling us the proof-idea.    

For the lemma below, we define the graph of a function $f:\DD^m \rightarrow C_r(\DD^n)$ to be the subset of $\DD^m \times \DD^n$ given by
$$gr(f)= \{ (x,y): x \in \DD^m, y \in q_r \circ f(x) \subset \DD^n\}.$$

\begin{lemma}\label{Dold}
Suppose that $f: \DD^m \rightarrow C_r(\DD^n)$ has the same value $(0,\ldots,0)\in C_r(\DD^n)$ on the boundary $\partial \DD^m =S^{m-1}$. Consider the map $i: S^{m-1} \rightarrow gr(f)$ with $i(x)= (x,0)$. Then $i_*$ induces the zero map on rational homology, or equivalently,
$$ i_* \otimes \mathbb{Q} : H_{m-1}(S^{m-1}) \otimes \mathbb{Q} \rightarrow H_{m-1}(gr(f)) \otimes \mathbb{Q}$$
is zero.
\end{lemma}

\begin{proof}
We will apply the Dold-Thom theorem. The reduced homology $\tilde{H}_*(S^{m-1})$ is concentrated in degree $m-1$, in which degree it is $\mathbb{Z}$. The generator is in the image of the Hurewicz homomorphism $\pi_{m-1}S^{m-1} \stackrel{\cong}{\rightarrow} H_{m-1}(S^{m-1})$, and is the image of the identity
 element of $\pi_{m-1}S^{m-1}$
under the Hurewicz map.

To the function $f:\DD^m\rightarrow C_r(\DD^n)$ we associate a map $\beta(f):\DD^m \rightarrow C_r(gr(f))$ (the $r$-point configuration space of the graph of $f$)
as follows. Writing the elements of the configuration space using the multiplicative notation, we define
$$ \beta(f)(x) = (x,y_1)^{i_1} \ldots (x,y_k)^{i_k} \in C_r(gr(f))$$
if $f(x)= y_1^{i_1}\ldots y_k^{i_k}$. This induces a continuous embedding of the disk $\DD^m$ in the $r$-point configuration space of the graph of $f$. 
The induced map on the boundary $S^{m-1}$ is the diagonal embedding $e_r(i): x\mapsto (x,0)^r$.

Let $e_r$ be the composition
$$S^{m-1}\stackrel{\Delta}{\rightarrow} \times_r S^{m-1} \rightarrow C_r(S^{m-1}),$$ where $\Delta$ 
denotes the diagonal embedding of $S^{m-1}$ into the $r$-fold product $\times_r S^{m-1}$ of $S^{m-1}$ with itself.
Now $e_r(i)$
 factors as the composite $ C_1(S^{m-1}) ( \simeq S^{m-1})\stackrel{e_r}{\rightarrow} C_r(S^{m-1}) \stackrel{C_r(i)}{\rightarrow} C_r(gr(f))$,
where $C_r(i)$ denotes the map on $r-$fold configuration spaces induced by $i$.
 We also have the inclusion $i_r : S^{m-1} \simeq C_1(S^{m-1}) \rightarrow C_r(S^{m-1})$ defined as $i_r(x) = x.*^{r-1}$ ($*$ a fixed base-point of $S^{m-1}$). (Note that this is defined also for $r=\infty$ as the inclusion $C_1(S^{m-1}) \rightarrow C_\infty(S^{m-1})$)

We complete the proof modulo Sub-lemma \ref{subl}
which states  that $e_r\simeq r. i_r$, or equivalently that the diagram
$$\xymatrix{ S^{m-1} \ar[r]^{\simeq}\ar[d]^{\phi_r} & C_1(S^{m-1})  \ar[d]_{e_r} \\ 
              S^{m-1}\ar[r]^{i_r}                       & C_r(S^{m-1})    }$$
is homotopy commutative, where degree of $\phi_r=r$. (Note that in the homotopy group $\pi_{m-1}X$, $f\circ \phi_r \simeq rf$ by definition.)

 We have the commutative diagram
$$\xymatrix{ S^{m-1} \ar[r]^{\simeq} & C_1(S^{m-1}) \ar[r]^{i} \ar[d]_{i_r}  & C_1(gr(f)) \ar[r]^{\simeq} \ar[d] & gr(f) \\ 
                                     & C_r(S^{m-1})\ar[r]^{C_r(i)} \ar[d]                        & C_r(gr(f)) \ar[d]               &  \\                                                        & C_{\infty}(S^{m-1}) \ar[r]^{C_\infty(i)}                       & C_{\infty}(gr(f))               & }$$

We know that $\pi_*C_\infty(X) \cong H_*(X; \mathbb{Z})$ by the Dold-Thom Theorem. Also, $i_* : H_{m-1}(S^{m-1}) \rightarrow H_{m-1}(gr(f))$ is the same 
as the map $\pi_{m-1}C_\infty (i): \pi_{m-1} C_\infty(S^{m-1}) \rightarrow \pi_{m-1} C_\infty(gr(f))$ induced between the $(m-1)$th homotopy groups.

Next note that $\beta(f)$ gives a null-homotopy of $e_r(i)$ and hence of $C_r(i) \circ e_r\simeq  C_r(i) \circ i_r \circ \phi_r$. 
$$\xymatrix{  C_1(S^{m-1})  \ar[dd]_{e_r\simeq ri_r} \ar[rrrdd]^{e_r(i)}   \\ 
\\
              C_r(S^{m-1})\ar[rrr]^{C_r(i)}                      &&  & C_r(gr(f))               }$$
\

From the commutative diagrams above we get that 
\begin{eqnarray*}
  & C_r(i)\circ i_r \circ \phi_r & \simeq 0 \\
\Rightarrow & C_\infty(i)\circ i_\infty \circ \phi_r & \simeq 0 \\
\Rightarrow & r.i_* & =  0 \\
\Rightarrow & i_* \otimes \mathbb{Q} & =0,
  \end{eqnarray*}
where the second implication follows from the commutativity of the (big) three-line diagram above and the third implication follows from the Dold-Thom theorem.
This completes the proof.
\end{proof}

\begin{sublemma} $e_r\simeq r. i_r$. \label{subl} \end{sublemma}

\begin{proof}
 Both $e_r$ and $r. i_r$ are elements of $\pi_{m-1}C_r(S^{m-1})$. A
  CW complex structure on $X$ induces a CW complex structure on $C_r(X)$ and $C_\infty(X)$ 
(cf.  \cite{hatcher}) 
which makes the latter $k$-connected if $X$ is. Therefore, the
 Hurewicz theorem implies that $\pi_{m-1}C_r(S^{m-1}) \cong H_{m-1}(C_r(S^{m-1}))$. Hence it suffices to show that ${e_r}_*=r{i_r}_*$ on homology. 

Note that $H_{m-1} (C_r(S^{m-1})) \cong \mathbb{Z}$ and ${i_r}_*$ induces an isomorphism $H_{m-1}(S^{m-1})\rightarrow H_{m-1}(C_r(S^{m-1}))$. We may write $e_r$ as a composite 
$$S^{m-1}\stackrel{\Delta}{\rightarrow} \times_r S^{m-1} \rightarrow C_r(S^{m-1})$$
where the first map is the diagonal map, and the second is the canonical projection. On $H_{m-1}$, $\Delta_*$ takes $1\in H_{m-1}(S^{m-1})$ to $(1,\ldots,1)\in H_{m-1}(\times_rS^{m-1})=\mathbb{Z}^r$. 

The (m-1)-skeleton of $\times_rS^{m-1}$ is $\vee_r S^{m-1}$ (under the standard CW complex structure on the product induced from the CW complex structure on $S^{m-1}$ with one m-1-cell). The projection map induces $i_r$ on each factor. Therefore the projection map on $H_{m-1}$ is the sum $\mathbb{Z}^r \rightarrow \mathbb{Z}$ given by $(a_1,\ldots,a_r)\mapsto a_1+\ldots+a_r$. Therefore 
$${e_r}_*(1)=r$$
as required. 
\end{proof}

\begin{rmk}{\label{min}}
In the case $n=1$ of the above lemma, we get an easier proof which works for any map $\DD^m \rightarrow Sub_r(I)$. In this case the map which sends a subset to its minimum is a continuous function from $Sub_r(I)$ to $I$. Therefore we have a disk ($x\mapsto (x,min(f(x))$) in $gr(f)$ whose boundary is $S^{m-1}= \partial gr(f)$. This shows that $i_* =0 $ and hence, $i_* \otimes \mathbb{Q} =0$.   
\end{rmk}

\begin{rmk}{\label{linear}}
In the case $m=1$ also the above conclusion is correct for any map $I\rightarrow Sub_r(\DD^n)$. The graph $gr(f)$ is path connected so one can choose a path from the point over -1 to the point over 1. Removing loops one obtains a 1-cell with boundary $\partial gr(f)$. 
\end{rmk}

The conclusion of the lemma above plays a crucial role in proving the existence of Nash equilibria in the next section. Observe that the map $i_*\otimes \mathbb{Q}=0$ implies that the generator of $H_{m-1}(S^{m-1})$ maps to a torsion class in $H_*(gr(f))$ (a class whose multiple is 0), and therefore the map $i_*$ cannot be injective.
From the long exact sequence of homology groups we conclude that $H_m(gr(f),\partial gr(f))\rightarrow H_{m-1}(\partial gr(f))$ is non zero. This will be an
ingredient in Theorem \ref{multisel}.    

In order to arrive at this conclusion, it also suffices to have an embedded disk in $gr(f)$ lying over the disk $\DD^m$ (that is, a section of the projection map $gr(f)\rightarrow \DD^m$ which sends the element $(x,y);y\in f(x)$ to $x$) . This is classically termed as a ``Selection" (cf. \cite{michael1, michael2, michael3}).
 The conditions we are working with in this paper are not enough to obtain a true "selection".
However, Theorem \ref{multisel} below proves the existence  of a special homology class which serves as a replacement and which
we call a ``Homological Selection".

\begin{defn}\label{homseldef}
Suppose that $f: \DD^m \rightarrow \HH_c(\DD^n)$ is a continuous function from $ \DD^m$ into the space of compact subsets of
$ \DD^n$. 
Let $gr(f) = \lbrace (x,y) \in \DD^m \times \DD^n\, |\, y \in \, f(x)\rbrace$
and let $gr(\partial f) = \lbrace (x,y) \in \partial \DD^m \times \DD^n\, |\, y \in \, f|_{ \partial \DD^m}(x)\rbrace$, where 
$f|_{ \partial \DD^m}$ denotes the restriction of $f$ to ${ \partial \DD^m}$. 
 A non-zero  
$m-$dimensional chain $c_m$ supported in $gr(f)$ is said to be a {\bf homological selection} of $f$ if its boundary
$\partial^m c_m$ is supported in $gr(\partial f)$ and the projection $H_m(gr(f),gr(\partial f)) \rightarrow H_m(\DD^m,\partial \DD^m)$ maps $c_m$ to a non zero class. In such a situation we say that $f$ admits a  homological selection.
\end{defn}

\begin{rmk} In the definition above, a homological selection actually refers to a choice of a finite number of image points 
counted
with (positive integral) multiplicity,
for every point in the domain of a multifunction.\\
We reiterate that we are working with integral homology classes unless otherwise stated.  \end{rmk}

Theorem \ref{multisel} below is now a consequence of Lemma \ref{Dold}.

\begin{theorem} {\bf Homological Selection Theorem:}
Suppose that $f: \DD^m \rightarrow C_r(\DD^n)$ is a continuous function that has the same value $(0,\ldots,0)$ on the boundary $\partial \DD^m =S^{m-1}$.
Then $f$ admits a  homological selection. \label{multisel}
\end{theorem}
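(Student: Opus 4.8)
The plan is to deduce the theorem from Lemma \ref{Dold} by a diagram chase in the long exact sequence of the pair $(gr(f),gr(\partial f))$ with rational coefficients. I assume $m \ge 2$ throughout, the degenerate case $m=1$ being exactly Remark \ref{linear}. The one structural observation that links the hypotheses of the lemma to the relative setup is this: since $f$ takes the constant value $(0,\ldots,0)$ (the configuration $r\cdot(0,\ldots,0)$) on $\partial\DD^m$, the boundary graph is
$$gr(\partial f) = \{(x,(0,\ldots,0)) : x \in S^{m-1}\},$$
which is precisely the image of the map $i$ of Lemma \ref{Dold} and is carried homeomorphically onto $S^{m-1}$ by the projection $p\colon (x,y)\mapsto x$. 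Under this identification the inclusion $j\colon gr(\partial f)\hookrightarrow gr(f)$ that appears in the long exact sequence coincides with $i$, so Lemma \ref{Dold} yields $j_*\otimes\Q = 0$ on $H_{m-1}$.

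I would then run the long exact sequence of the pair with $\Q$-coefficients,
$$\cdots \longrightarrow H_m(gr(f),gr(\partial f);\Q)\xrightarrow{\ \partial\ }H_{m-1}(gr(\partial f);\Q)\xrightarrow{\ j_*\ }H_{m-1}(gr(f);\Q)\longrightarrow\cdots.$$
As $H_{m-1}(gr(\partial f);\Q)\cong H_{m-1}(S^{m-1};\Q)\cong\Q$ and $j_*$ vanishes rationally, exactness at $H_{m-1}(gr(\partial f);\Q)$ gives $\operatorname{im}\partial = \ker j_* = H_{m-1}(gr(\partial f);\Q)$, so the connecting homomorphism $\partial$ is surjective. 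I then choose a class $[c_m]\in H_m(gr(f),gr(\partial f);\Q)$ with $\partial[c_m]$ a generator of $H_{m-1}(gr(\partial f);\Q)$ and fix a relative cycle $c_m$ representing it.

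To confirm that $c_m$ is a homological selection I would appeal to naturality of the connecting homomorphism for the projection of pairs $p\colon(gr(f),gr(\partial f))\to(\DD^m,\partial\DD^m)$, which furnishes the commuting square
$$\begin{CD}
H_m(gr(f),gr(\partial f);\Q) @>{\partial}>> H_{m-1}(gr(\partial f);\Q)\\
@V{p_*}VV @VV{p_*}V\\
H_m(\DD^m,\partial\DD^m;\Q) @>{\partial}>> H_{m-1}(\partial\DD^m;\Q).
\end{CD}$$
The right-hand $p_*$ is an isomorphism because $p$ restricts to a homeomorphism $gr(\partial f)\to\partial\DD^m$, and the lower $\partial$ is an isomorphism $\Q\to\Q$ since $H_m(\DD^m;\Q)=H_{m-1}(\DD^m;\Q)=0$. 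Hence $\partial(p_*[c_m]) = p_*(\partial[c_m])$ is a generator of $H_{m-1}(\partial\DD^m;\Q)$, and injectivity of the lower $\partial$ forces $p_*[c_m]$ to be a nonzero (indeed generating) class in $H_m(\DD^m,\partial\DD^m;\Q)$. This is exactly the defining property of a homological selection; clearing denominators in $c_m$ produces an integral relative cycle should integer coefficients be desired.

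The substantive work is entirely contained in Lemma \ref{Dold}; everything downstream is formal, resting only on exactness and naturality. I therefore do not anticipate a real obstacle. The only points needing a little care are the identification of $gr(\partial f)$ with $S^{m-1}$ and the verification that the inclusion $j$ equals the lemma's map $i$ under it, together with the elementary passage from a rational class to an integral chain --- none of which poses any difficulty.
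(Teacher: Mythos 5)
Your proof is correct and takes essentially the same route as the paper's: both deduce the theorem from Lemma \ref{Dold} by running the long exact sequence of the pair $(gr(f),gr(\partial f))$ and then using the naturality square for the projection of pairs onto $(\DD^m,\partial\DD^m)$ to push the relative class down to a nonzero class in $H_m(\DD^m,\partial\DD^m)$. The only cosmetic difference is that you work with $\Q$-coefficients throughout and clear denominators at the end, whereas the paper works integrally and uses the rational vanishing of the inclusion only to detect that it has nonzero kernel, hence that the connecting map is nonzero.
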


\begin{proof}

We know from Lemma \ref{Dold} that the map 
$$H_{m-1}(S^{m-1})\otimes \mathbb{Q} \cong H_{m-1}( gr(\partial f)) \otimes \mathbb{Q} \rightarrow H_{m-1}(gr(f))\otimes \mathbb{Q}$$ induced by the
inclusion of $gr(\partial f)$ into $gr(f)$
is zero. We consider the commutative square 
   $$\xymatrix{ H_m(gr(f),\partial gr(f))  \ar[r]^{\partial} \ar[d]_{{\Pi}_*}  &  H_{m -1}( gr(\partial f)) \ar[d]^{\cong} \\ 
             H_m(\DD^m,\partial \DD^m) \ar[r]^{\cong}                 &  H_{m -1}(\partial \DD^m)              },$$
where $\Pi$ is the natural projection from $gr(f)$ to $\DD^m$.
It suffices to show that the boundary map $\partial$ from $H_m(gr(f),\partial gr(f))$ to $ H_{m -1}( gr(\partial f))$ is non zero. The latter group is the top homology of the sphere, hence $\cong \mathbb{Z}$. The next term in the long exact sequence is $H_{m -1}(gr(f))$. 

Hence it is enough to show that
 the map $ H_{m -1}(\partial gr(f)) \rightarrow H_{m -1}(gr(f))$ has non zero kernel. Since 
 the  group
$ H_{m -1}(\partial gr(f))$ is $\mathbb{Z}$, it suffices to show
 that this map is the zero map after tensoring with $\mathbb{Q}$. This is precisely what Lemma \ref{Dold} ensures and we are done. 
\end{proof}
\begin{rmk} The hypothesis that $f: \DD^m \rightarrow C_r(\DD^n)$  has the same value $(0,\ldots,0)$ on the boundary $\partial \DD^m =S^{m-1}$
can be dropped. Any $f: \DD^m \rightarrow C_r(\DD^n)$ can be extended by a linear homotopy to such a function $f_1$
by enlarging $\DD^m$ slightly by adding an annulus $A_m$. We note that $gr(A_m)=\lbrace (x,y) \in A_m \times \DD^n\, |\, y \in \, f|_{ A_m}(x)\rbrace$ deformation retracts to the outer boundary $S^{m-1}$. Therefore,
 $$H_m(gr(f_1), gr(\partial f_1)) \cong H_m(gr(f_1), gr(A_m)) \cong H_m(gr(f),gr(\partial f))$$
  the second isomorphism coming from excision. Since $f_1$ admits  a  homological selection by Theorem \ref{multisel} the same class under the above isomorphisms gives  a  homological selection for $f$.
\end{rmk}

Next we apply the Homological Selection Theorem in the context
 of Nash equilibria. We use the following notation: for any set $A$ in a metric space $(X,d)$ and any $\ep > 0$, the $\ep$-neighborhood of $A$ in $X$ is written as $A^\ep$. Recall that for a cost function $r_i:\A=\A_i\times \A_{-i}\ra \R$ we have a best-response map
$$R_i:\A_{-i}\ra \HH_c(\A_i)$$ 
which we have assumed is a continuous function. 

In the next Proposition we show that the assumption that 
$r_i$ is weakly configurable allows us to apply Theorem \ref{multisel}. The hypothesis that
 $r_i$ is weakly configurable  ensures (given $\ep > 0$) the existence of  some $M$, and
 an $R_i^M:\A_{-i}\ra C_M(\A_i)$, such
 that $d_\HH(R_i(a),q_M \circ R_i^M(a))<\ep$ $\forall a \in \A_{-i}$ (recall that $q_M$ is the map from $C_M(\A_i)$ to $\HH_c(\A_i)$ that forgets
multiplicities). 
The latter condition implies that $gr(R_i^M)\subset gr(R_i)^\ep$. The Homological Selection Theorem \ref{multisel}
guarantees a Homological Selection  for $gr(R_i^M)$. This implies a Homological
 Selection for $gr(R_i)^\ep$ and is explicated in the Proposition below.  

For any $\ep > 0$ and for all $i$, $\mbox{gr}(R_i)^\ep$ is intrinsically a manifold with boundary and an open subset of $\SM$. Let $\Pi_i$ denote the (continuous) projection $\Pi_i : \mbox{gr}(R_i)^\ep \ra \SM_{-i}$. Let $\mbox{dim}(\SM_{-i}) =\sum_{j=1,j\neq i}^N n_j \equiv d_i$. Also , let $d_0 = \sum_{j=1}^N n_j $ so that $d_i = (d_0 -n_i)$.
As usual,  we denote the  boundary  of a manifold $M$ by $\partial M$.   

\begin{prop} \label{homocohomomap} Suppose that a
 cost function  $r_i: \A_i \times \A_{-i}\ra \R$ is  weakly configurable. Let $R_i$ be the response function of $r_i$. Then
there exists $\ep_0 > 0$ such that for all $0 < \ep < \ep_0$ and for all $i$, the induced homomorphism of $\Pi_i$ in the relative (co)homology \[\Pi_i^* : H^{d_i}\left(\SM_{-i},\partial \SM_{-i}\right) \lra H^{d_i}\left(\mbox{gr}(R_i)^\ep,\partial \mbox{gr}(R_i)^\ep\right)\] or equivalently \[\Pi_{i*} : H_{d_i}\left(\mbox{gr}(R_i)^\ep,\partial \mbox{gr}(R_i)^\ep\right) \lra H_{d_i}\left(\SM_{-i},\partial \SM_{-i}\right)\] is nonzero.
\end{prop}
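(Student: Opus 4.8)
The plan is to obtain Proposition \ref{homocohomomap} directly from the Homological Selection Theorem (Theorem \ref{multisel}) by replacing $R_i$ with a nearby configuration-space map. Fix $i$ and identify $\SM_{-i}=\A_{-i}\cong\DD^{d_i}$ and $\A_i\cong\DD^{n_i}$, so that $gr(R_i)\subset\SM_{-i}\times\A_i$ and $\Pi_i$ is the projection forgetting the $\A_i$-coordinate. Since $r_i$ is weakly polynomial-like, for the given $\ep$ I would first choose $M$ and a map $R_i^M:\SM_{-i}\to C_M(\A_i)$ with $d_H(R_i(a_{-i}),R_i^M(a_{-i}))<\ep$ for all $a_{-i}$. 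If $y$ is a factor of $R_i^M(a_{-i})$ then $d(y,R_i(a_{-i}))<\ep$, so in the $\sup$-metric the point $(a_{-i},y)$ lies within $\ep$ of $gr(R_i)$; hence $gr(R_i^M)\subset gr(R_i)^\ep$, and the part of $gr(R_i^M)$ over $\partial\SM_{-i}$, namely $gr(\partial R_i^M)$, sits inside the part of $gr(R_i)^\ep$ over $\partial\SM_{-i}$, which I take to be $\partial\, gr(R_i)^\ep=\Pi_i^{-1}(\partial\SM_{-i})$.

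Next I would apply Theorem \ref{multisel} to $R_i^M:\DD^{d_i}\to C_M(\DD^{n_i})$. The hypothesis that the map be constant on $\partial\DD^{d_i}$ need not hold here, but the Remark following Theorem \ref{multisel} shows it may be dropped by enlarging the base disk with a collar. Thus $R_i^M$ admits a homological selection: a $d_i$-chain $c$ supported in $gr(R_i^M)$ with $\partial c$ supported in $gr(\partial R_i^M)$, whose class $[c]\in H_{d_i}\!\left(gr(R_i^M),gr(\partial R_i^M)\right)$ is carried by the projection to a nonzero (fundamental) class in $H_{d_i}(\SM_{-i},\partial\SM_{-i})\cong\Z$.

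I would then transport $[c]$ along the inclusion of pairs $\iota:\left(gr(R_i^M),gr(\partial R_i^M)\right)\hookrightarrow\left(gr(R_i)^\ep,\partial\, gr(R_i)^\ep\right)$ justified above. Because $\Pi_i\circ\iota$ equals the projection $\Pi$ of Theorem \ref{multisel} (both forget the $\A_i$-coordinate), functoriality gives
\[
\Pi_{i*}(\iota_*[c])=\Pi_*[c]\neq 0,
\]
so $\iota_*[c]\in H_{d_i}\!\left(gr(R_i)^\ep,\partial\, gr(R_i)^\ep\right)$ has nonzero image and $\Pi_{i*}$ is nonzero. The equivalent cohomological statement follows by Poincar\'e--Lefschetz duality together with the universal coefficient theorem, the target $H^{d_i}(\SM_{-i},\partial\SM_{-i})\cong\Z$ being free. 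Finally, taking $\ep_0$ to be the minimum over the finitely many players $i$ of the thresholds below which $gr(R_i)^\ep$ carries the asserted manifold-with-boundary structure produces a uniform $\ep_0$.

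The hard part will be the bookkeeping around the two meanings of ``boundary.'' One must make sure that the relative group $H_{d_i}\!\left(gr(R_i)^\ep,\partial\, gr(R_i)^\ep\right)$ is formed with respect to the vertical boundary $\Pi_i^{-1}(\partial\SM_{-i})$ over which $\partial c$ lies, so that $\Pi_i$ is genuinely a map of pairs, and that the $\ep$-thickening does not contribute spurious boundary coming from $\partial\A_i$ that would obstruct this. Verifying that $gr(R_i)^\ep$ retracts onto a thickening of $gr(R_i^M)$ compatibly with $\Pi_i$, so that $\iota_*$ is defined on the correct relative groups, is the technical heart; once the boundary conventions are pinned down, the nonvanishing of $\Pi_{i*}$ is an immediate consequence of Theorem \ref{multisel}.
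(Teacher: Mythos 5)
Your proposal is correct and takes essentially the same route as the paper's own proof: both use the weakly polynomial-like hypothesis to replace $R_i$ by an approximating map lifting to $C_M(\A_i)$ whose graph lies in $\mbox{gr}(R_i)^\ep$, invoke the Dold--Thom based selection result (Lemma \ref{Dold}, packaged as Theorem \ref{multisel} together with the Remark dropping the constant-boundary hypothesis) to get a relative class with nonzero image in $H_{d_i}(\SM_{-i},\partial \SM_{-i})$, and then factor the projection through the inclusion of pairs $\left(gr(R_i^M),gr(\partial R_i^M)\right)\hookrightarrow\left(\mbox{gr}(R_i)^\ep,\partial\, \mbox{gr}(R_i)^\ep\right)$ to conclude $\Pi_{i*}\neq 0$. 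Your explicit attention to the two meanings of ``boundary'' is a more careful rendering of a point the paper's terse proof leaves implicit, but it is the same argument.
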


\begin{proof}
Firstly, suppose that there is a continuous function $f$ from $\SM_{-i}$ to $C_M(\A_i)$ so that 
$$q_M(f(a))\subset R_i(a)^\ep \ \forall a\in \SM_{-i}.$$ (As we shall see, the weakly configurable condition guarantees the existence of such an $f$.)
Applying Theorem \ref{multisel} to $f$ we have that the map
$H_{d_i}(gr(f),\partial gr(f)) \rightarrow H_{d_i}(\SM_{-i},\partial \SM_{-i})$ is non zero. 

Note that the above condition on $f$ 
implies that $gr(f)\subset gr(R_i)^\ep$. Therefore, at the level of homology we have a composition of maps 
$$H_{d_i}(gr(f), \partial gr(f)) \rightarrow  H_{d_i}\left(\mbox{gr}(R_i)^\ep,\partial \mbox{gr}(R_i)^\ep\right) \lra H_{d_i}(\SM_{-i},\partial \SM_{-i})$$
Now from Theorem \ref{multisel} we have that the composite is non-zero, which implies that the second map is non-zero. This is the second part of the
statement of the
Proposition. The first part is an equivalent formulation in terms of cohomology.

It thus remains only to establish the existence of a suitable $f$. This is guaranteed by the weakly configurable condition. Since $r_i$ is weakly configurable, for every $\ep > 0$
 there exists $R_{i,\ep}$ such that\\
a) $d_H(R_{i,\ep}(a),R_i(a))<\ep$ for all $a\in \A_{-i}$, and \\
b) $R_{i,\ep}$ can be subset-lifted to a map $R_i^N$ into the space $C_N(\DD^n)$. \\
 Choose $f=R_i^N$. Then condition a) implies that $q_M(f(a))\subset R_i(a)^\ep \ \forall a\in \SM_{-i}$
as required.  This completes the proof.   
 \end{proof}

\section{Existence of Nash Equilibria}
In this section, we prove the existence of Nash equilibria under the assumption (\textbf{A2}).

For any $k \in \mathbb{N}$ we denote $[k] = \{1,2,\ldots,k\}$. We begin with the following useful lemma.

\begin{lemma} \label{hausintersect}
Given closed subset sequences $\{A_{ji}\}_{i \in \mathbb{N}},j \in [k]$ of a compact subset $S$ of $X$ such that $\bigcap_{j}A_{ji} \neq \emptyset,\forall i$ and $d_{H}(A_{ji},A_j) \stackrel{i \ra \infty}{\lra} 0$ for given non-empty compact subsets $A_j,j \in [k]$ of $S$, then $\bigcap_j A_j \neq \emptyset$.
\end{lemma}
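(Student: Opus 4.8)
The plan is to use compactness of $S$ to extract a limit point of a sequence of common points, and then to invoke Hausdorff convergence to place that limit point in each $A_j$. First I would record the relevant closedness facts. Since each $A_{ji}$ is a closed subset of the compact set $S$, it is itself compact; moreover, as each $A_j$ is the Hausdorff limit of a sequence of compact subsets of the compact space $S$, it too is compact, and in particular closed. This closedness is precisely what will allow us to conclude that a suitable limit point belongs to $A_j$.

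Next, for each $i$ I would use the hypothesis $\bigcap_j A_{ji} \neq \emptyset$ to choose a point $x_i \in \bigcap_{j \in [k]} A_{ji}$, so that $x_i \in A_{ji}$ simultaneously for all $j$. Since $\{x_i\} \subseteq S$ and $S$ is compact, I pass to a subsequence $\{x_{i_\ell}\}$ converging to some $x \in S$. The claim to establish is that $x \in A_j$ for every $j$, which immediately gives $x \in \bigcap_{j \in [k]} A_j$.

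To prove the claim, fix $j$. For each $\ell$, since $x_{i_\ell} \in A_{j,i_\ell}$, the definition of the Hausdorff metric in \eqref{hausmetric} gives $\inf_{b \in A_j} d(x_{i_\ell}, b) \leq d_H(A_{j,i_\ell}, A_j)$; as $A_j$ is compact this infimum is attained at some $y_\ell \in A_j$ with $d(x_{i_\ell}, y_\ell) \leq d_H(A_{j,i_\ell}, A_j)$. Because $d_H(A_{j,i_\ell}, A_j) \to 0$ and $x_{i_\ell} \to x$, the triangle inequality $d(x, y_\ell) \leq d(x, x_{i_\ell}) + d(x_{i_\ell}, y_\ell)$ forces $y_\ell \to x$. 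Since $y_\ell \in A_j$ and $A_j$ is closed, we conclude $x \in A_j$.

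As $j \in [k]$ was arbitrary, $x$ lies in $\bigcap_{j \in [k]} A_j$, so this intersection is nonempty and the lemma follows. The argument is essentially a diagonal-style compactness extraction, and the one place I would be most careful is the preliminary step of verifying that each $A_j$ is closed: without it, the limit $x$ of the $y_\ell \in A_j$ need not belong to $A_j$, and the conclusion could fail. This is exactly why one uses that the Hausdorff limit of compact subsets of a compact space is again compact.
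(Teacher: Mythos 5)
Your core argument --- choosing witnesses $x_i \in \bigcap_j A_{ji}$, extracting a convergent subsequence by compactness of $S$, and pushing the limit into each $A_j$ via the bound $\inf_{b\in A_j} d(x_{i_\ell},b) \le d_H(A_{j,i_\ell},A_j)$ plus the triangle inequality --- is sound, and it is a genuinely different and more direct route than the paper's. The paper never extracts a sequence of points: it first shows $\bigcap_j A_j^{\epsilon} \neq \emptyset$ for every $\epsilon>0$, then supposes $\bigcap_j A_j = \emptyset$ and derives a contradiction by taking minimum separation distances between disjoint partial intersections of the $A_j$ and showing the $\epsilon$-neighborhoods could not all meet for small $\epsilon$. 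However, your proof hinges on the preliminary claim that each $A_j$ is automatically compact because it is a Hausdorff limit of compact sets, and that claim is false; this is a genuine gap, not a cosmetic one.

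The reason it fails is that on arbitrary subsets the Hausdorff distance is only a pseudometric: $d_H(B,\overline{B})=0$ for every $B$, so the hypothesis $d_H(A_{ji},A_j)\to 0$ determines only the closure $\overline{A}_j$ and places no topological restriction on $A_j$ itself. In fact the lemma as literally stated (with arbitrary non-empty $A_j$) is false: take $S=[0,1]$, $k=2$, $A_{1i}=\{1/i\}$, $A_{2i}=[1/i,1]$, $A_1=\{0\}$, $A_2=(0,1]$. Then each $A_{ji}$ is closed, $\bigcap_j A_{ji}=\{1/i\}\neq\emptyset$, $d_H(A_{1i},A_1)=1/i\to 0$ and $d_H(A_{2i},A_2)\le 1/i\to 0$, yet $A_1\cap A_2=\emptyset$. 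What your argument actually establishes from the stated hypotheses is $\bigcap_j \overline{A}_j\neq\emptyset$. The honest fix is to add the hypothesis that each $A_j$ is closed (hence compact in $S$); this is implicitly assumed by the paper as well --- its own proof needs strictly positive distances between disjoint partial intersections, and in the application to Theorem \ref{existNash} the sets $A_j$ are the closed graphs $\mbox{gr}(R_j)$ --- and with that hypothesis in place your proof is complete, correct, and arguably cleaner than the paper's. But closedness must be assumed; it cannot be derived from Hausdorff convergence as you attempted.
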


\begin{proof} Let $x_i \in \bigcap_{j}A_{ji}$. Passing to a subsequence if necessary, $x_i \ra x_\infty \in S$.
It suffices to show that $x_\infty \in A_j$ for all $j= 1,2,\ldots,k$.

Suppose not, i.e. there exists $j$ such that $x_\infty \notin A_j$. Let $d(x_\infty, A_j) = \ep > 0$.
Choose $i_0$ such that $d_{H}(A_{ji},A_j) \leq \frac{\ep}{2}$ for all $i \geq i_0$. Then $d(x_\infty, A_{ji}) \geq \frac{\ep}{2}$
for all $i \geq i_0$. In particular, $d(x_\infty, x_i) \geq \frac{\ep}{2}$
for all $i \geq i_0$, contradicting the fact that $x_i \ra x_\infty \in S$. 
 \end{proof}

Before we state our main theorem we  recall the notion of
{\bf external cup product, or cross product} of relative cohomology classes (\cite{hatcher} p. 220).
Since $\A = \A_i \times  \A_{-i}$, we have an 
 isomorphism $H^{n_i}(\A_i, \partial \A_i) \otimes H^{d_i}(\A_{-i}, \partial \A_{-i})\ra H^{d_0}(\A, \partial \A)$  (\cite{hatcher} Theorem 3.20).
This isomorphism is implemented  by the cross product $a \times b = p_1^\ast (a)\cup p_2^\ast (b)$, where
$p_1: (\A_i \times \A_{-i}, \partial \A_i\times \A_{-i}) \ra (\A_i, \partial \A_i)$ and 
     $p_2: (\A_{-i} \times \A_i, \partial \A_{-i} \times \A_i)\ra (\A_{-i}, \partial \A_{-i})$ are projections of pairs.
Also, since $H^{n_i}(\A_i, \partial \A_i) = H^{d_i}(\A_{-i}, \partial \A_{-i})= H^n(\A, \partial \A) = \mathbb{Z}$,  (relative) Poincar\'e
Duality holds in this set up.

Using Definition \ref{optrespgr} the existence of Nash equilibria can be stated as follows.

\begin{theorem} \label{existNash} Consider an $N-$person non-cooperative game with mixed strategy spaces satisfying assumption 
\textbf{(A2)}. Let $r_1, \cdots, r_N$ be the cost functions and  $R_1, \cdots, R_N$ be the response functions. Then,
$\bigcap_{i=1}^N \mbox{gr}(R_i) \neq \emptyset$. Equivalently, the game has at least one Nash equilibrium.
\end{theorem}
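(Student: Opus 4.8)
The plan is to combine the cohomological input of Proposition \ref{homocohomomap} with an intersection-theoretic cup-product computation on $\A\cong\DD^{d_0}$, and then pass to a limit with Lemma \ref{hausintersect}. I work throughout with rational coefficients, so that "nonzero'' survives the integral multiples produced by Proposition \ref{homocohomomap}. First I would reduce the theorem to the assertion that $\bigcap_{i=1}^N \overline{\mbox{gr}(R_i)^\ep}\neq\emptyset$ for every sufficiently small $\ep>0$. Granting this, fix a sequence $\ep_k\to 0$ and set $A_{ik}=\overline{\mbox{gr}(R_i)^{\ep_k}}$ and $A_i=\mbox{gr}(R_i)$; these are closed subsets of the compact space $\A$, with $\bigcap_i A_{ik}\neq\emptyset$ for every $k$ and $d_H(A_{ik},A_i)\le\ep_k\to0$. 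Lemma \ref{hausintersect} then yields $\bigcap_i\mbox{gr}(R_i)\neq\emptyset$, which by Definition \ref{optrespgr} is exactly a Nash equilibrium.

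For the core assertion I would attach to each graph a dual class supported near it. Write $\A=\A_i\times\A_{-i}$ and let $\partial_i\A=\partial\A_i\times\A_{-i}$, so that $\partial\A=\bigcup_i\partial_i\A$ and $\mbox{gr}(R_i)$ has its frontier on the complementary faces $\A_i\times\partial\A_{-i}$. For each $i$ I would produce a class $\phi_i\in H^{n_i}(\A,\partial_i\A;\Q)$ that is (a) dual to the graph and (b) supported in $K_i:=\overline{\mbox{gr}(R_i)^{\ep/2}}$. The construction uses Proposition \ref{homocohomomap}: the map $\Pi_i^*:H^{d_i}(\A_{-i},\partial\A_{-i})\to H^{d_i}(\mbox{gr}(R_i)^\ep,\partial\mbox{gr}(R_i)^\ep)$ is nonzero, and transporting $\Pi_i^*(u_{-i})$, with $u_{-i}$ the generator, through Poincar\'e--Lefschetz duality on the $d_0$-manifold $\mbox{gr}(R_i)^\ep$ converts this degree-$d_i$ class into a degree-$n_i$ class that crosses each $\A_i$-fiber a nonzero number of times; extending it by zero off the neighborhood gives $\phi_i$, which restricts in each fiber to a nonzero multiple of the generator $v_i\in H^{n_i}(\A_i,\partial\A_i)$.

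I would then compute $\phi_1\cup\cdots\cup\phi_N\in H^{d_0}(\A,\partial\A;\Q)\cong\Q$. By the external cup (cross) product isomorphism recalled just before the theorem, $v_1\times\cdots\times v_N$ generates $H^{d_0}(\A,\partial\A)$; since each $\phi_i$ restricts fiberwise to a nonzero multiple of $v_i$, the product is a nonzero multiple of this generator, hence nonzero. On the other hand, because $\phi_i$ is supported in $K_i$, the relative cup product is supported in $\bigcap_i K_i$, and a nonzero class cannot be supported in the empty set. Therefore $\bigcap_i K_i\neq\emptyset$, which gives the required nonempty intersection of the $\ep$-neighborhoods and closes the reduction.

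The delicate point is the construction of the dual classes: reconciling the two relative structures that $\phi_i$ must simultaneously respect — the pair $(\A,\partial_i\A)$, needed so that the cup product computes the fundamental class of $(\A,\partial\A)$ via the cross-product isomorphism, and the support pair $(\A,\A\setminus K_i)$, needed so that the cup product vanishes when the neighborhoods are disjoint. Making the Poincar\'e--Lefschetz dual of $\Pi_i^*(u_{-i})$ land correctly in both, uniformly in $\ep$, is where the product structure of $\A$ and the nonvanishing of $\Pi_{i*}$ from Proposition \ref{homocohomomap} (ultimately Lemma \ref{Dold}) do the real work. One must also take care that the common intersection is allowed to meet $\partial\A$, since Nash equilibria may occur at pure strategies, which is precisely why the argument is organized relative to the faces $\partial_i\A$ rather than relative to all of $\partial\A$ at once.
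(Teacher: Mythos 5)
Your proposal follows essentially the same route as the paper's proof: reduce to showing that the closed $\ep$-neighborhoods of the graphs intersect, feed the nonvanishing from Proposition \ref{homocohomomap} through relative Poincar\'e duality and the cross-product isomorphism on $\A=\A_i\times\A_{-i}$ to obtain a nonzero class in $H^{d_0}(\A,\partial\A)$, and finish with Lemma \ref{hausintersect}. The only difference is packaging of the final intersection step: where you phrase it via cohomology classes supported in $K_i$ (so a nonzero cup product forces $\bigcap_i K_i\neq\emptyset$), the paper takes general-position simplicial representatives of the relative cycles $z_i$ and identifies $\bigcap_i \mbox{supp}(z_i)$ with the support of a zero-cycle Poincar\'e dual to $\bigcup_i w_i^\ast$ --- the same intersection-theoretic idea, with your supports formalism being the cleaner way to make precise the step the paper treats somewhat informally.
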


\begin{proof}  We continue with the notation of Proposition \ref{homocohomomap}, which gives the following:
\[\Pi_{i*} : H_{d_i}\left(\mbox{gr}(R_i)^\ep,\partial \mbox{gr}(R_i)^\ep\right) \lra H_{d_i}\left(\SM_{-i},\partial \SM_{-i}\right)\] is nonzero.
For each $i$, choose a relative $d_i-$cycle $z_i \in H_{d_i}\left(\mbox{gr}(R_i)^\ep,\partial \mbox{gr}(R_i)^\ep\right)
(\subset  H_{d_i}(\A_{-i}, \partial \A_{-i}))$ such that
$w_i=\Pi_{i*} (z_i) \neq 0$.

Let $w_i^\ast (\neq 0) \in H^{n_i}(\A_i, \partial \A_i)  $ be its  (relative) Poincar\'e Dual. Then $\bigcup_{i=1 \cdots N}  
w_i^\ast \in H^{d_0}(\A, \partial \A)$ is a non-zero cohomology class. Hence, by (relative) Poincar\'e
Duality again the intersection of the supports of the chains $z_i$ is non-empty, i.e. $\bigcap_{i=1 \cdots N}  \mbox{supp} (z_i)
\neq \emptyset$. One way to see this  is to take simplicial approximations of the chains $z_i$ homologous to $z_i$
contained in $\mbox{gr}(R_i)^\ep$ and ensure that their supports are in general position. Then $\bigcap_{i=1 \cdots N}  \mbox{supp} (z_i)$
is the support of a zero-cycle (Poincar\'e) dual to 
$\bigcup_{i=1 \cdots N}  
w_i^\ast$. Hence the intersection of the closures $Cl(\mbox{gr}(R_i)^\ep)$ is non-empty, i.e. $\bigcap_{i=1 \cdots N}  Cl(\mbox{gr}(R_i)^\ep)
\neq \emptyset$. Since this is true for all $\ep > 0$, it follows from Lemma \ref{hausintersect} that 
$\bigcap_{i=1 \cdots N}  \mbox{gr}(R_i)
\neq \emptyset$. Equivalently, the game has at least one Nash equilibrium.
\end{proof}

\section{A Counterexample}\label{ctreg}

In this section we give an example that  satisfies
 (\textbf{A1}) but not (\textbf{A2}). In the example,
the conclusion in Proposition \ref{homocohomomap} is no longer true.
This stresses the necessity of the assumption
(\textbf{A2}). Later in the section we use the example to construct continuous cost functions for which Nash equilibria don't exist. 

Recall that the assumption ({\bf A2}) states that the subset valued function can be lifted to the configuration space in a neighborhood. For the counterexample, we use finite subset values (that is, with values in $Sub_k$). We attempt to construct a function  $\DD^n\ra Sub_k(\DD^m)$ which does not satisfy ({\bf A2}).  In other words we wish to construct $f$ for which there are no lifts $\tilde{f}$ in the diagram
$$\xymatrix{  & & & C_N(\DD^m)\ar[d] \\ 
  \DD^n \ar[rr]_{f} \ar[rrru]^{\tilde{f}} & & Sub_k(\DD^m)\ar[r] & Sub_N(\DD^m)} $$
Recall that a point in the configuration space $C_N(\DD^m)$ is $x_1^{i_1}\cdots x_r^{i_r}$ with $x_j\in \DD^m$ such that $\sum i_j = N$. For a point $x\in \DD^n$ so that $f(x)=\{x_1,\cdots,x_k\}$ with $x_j\in \DD^m$, a lift $\tilde{f}(x)$ of $f(x)$ is of the form $x_1^{i_1}\cdots x_k^{i_k}$ such that $\sum i_j = N$. We think of the integers $i_j$ as weights assigned to the various points of the subset-valued function $f$, and we think of the lift $\tilde{f}$ to be a coherent assignment of weights.

Proposition \ref{subconf} indicates that the topology on $Sub_k(\DD^m)$ is the quotient topology inherited from $C_n(\DD^m)$ by forgetting weights/multiplicities. Observe from this that there are no trivial counterexamples as the constant maps will always lift.

In the following we need some more notation. If $j: A\to B$ is a continuous function then we get a map $Sub_k(j):Sub_k(A)\to Sub_k(B)$ given by 
$$Sub_k(j)(\{x_1,\cdots , x_k\})= \{j(x_1),\cdots, j(x_k)\}$$ 
In the special case that $j$ is an inclusion the map $Sub_k(j)$ is also an inclusion. For this reason we shall write for $A\subset B$, $Sub_k(A)\subset Sub_k(B)$. 

We shall also use the notion of linear homotopy below. Suppose that a space $A$ deformation retracts to a point $a\in A$. Write $H:A\times I \to A$ as $H(x,t)= H_t(x)$. This induces a deformation retraction $Sub_k(H)$ of $Sub_k(A)$ to $\{a\}\in Sub_k(A)$ by the formula
$$Sub_k(H)(\{x_1,\cdots,x_r\},t) = \{H(x_1,t),\cdots,H(x_r,t)\}$$
In the special case $A=\DD^2$, we call $H(x,t)=x(1-t)$ the linear homotopy of $\DD^2$ onto $(0,0)$. The corresponding deformation retraction $Sub_k(H)$ of $Sub_k(\DD^2)$ is also called the linear homotopy.

The example below will involve a map $\DD^2 \rightarrow Sub_3(\DD^2)$ (that is, $m=n=2$). Note that this is the `smallest 
complexity' case where such an example can exist: by Remark \ref{min} the conclusion in Proposition \ref{homocohomomap} is satisfied if $n=1$. By Remark \ref{linear} we have the case $m=1$, and since $Sub_2(X) = C_2(X)$ we obtain the result for the case $\DD^2 \rightarrow Sub_2(\DD^2)$. We also use the counterexample to demonstrate that Nash equilibria might not exist  or equivalently that Theorem
\ref{existNash} does not hold if we assume only (\textbf{A1}). However, before we get into the counterexample, a remark is in order.

\begin{rmk} {\rm Let $\PP(\DD^j)$ denote the set of probability measures on $\DD^j$
equipped with the weak topology.
Also let $\PP_n(\DD^j)$ be the subspace of
probability measures supported on (at most) $n$ points.
 Any continuous function $f:\DD^l \ra Sub_n(\DD^j)$ can be lifted to $\PP_n(\DD^j)$ by Michael's Selection Theorem \cite{michael3}.
Equivalently any continuous function $f:\DD^l \ra Sub_n(\DD^j)$ can be lifted to a continuous 
 {\bf  weighted  real valued multifunction} with
total weight constant  at each point, where all weights  are non-negative.
The  following counterexample  shows that we cannot upgrade this 
to a continuous 
   weighted  {\bf rational-valued} (or equivalently integer-valued, after normalization) multifunction with
constant total weight at each point.}
\end{rmk}

\subsection{Counterexample to Proposition 
 \ref{homocohomomap}}\label{ctreg1}
We write down this counter-example in two steps. In order to get a counter-example to Proposition \ref{homocohomomap} we must have a map into the subset space to which a coherent assignment of weights is not possible. The first step constructs a map into $Sub_3(\DD^1)$ for which certain elements are forced to have weight zero. More precisely, the graph of a map into $Sub_3(\DD^1)$ consists of $3$ strands, and one of the strands is forced to have weight $0$.

\subsubsection{Step 1: Multifunctions from $S^1$ to $\DD^1$:}
 Consider the map $g:S^1 \ra Sub_3(\DD^1)$ given by $g(e^{2\pi i \theta}) = \{0, 1, \theta \}$, where $\theta \in [0,1]$ and $\DD^1$ is
identified with $[0,1]$. Any lift $\til{g}:S^1 \ra C_N(\DD^1)$ for some $N\geq 3$ assigns non-negative integer weights to each point in $g(x)$ for all $x \in S^1$. Choose a regular point $a \in S^1$ (for $g$), i.e. a point such that $\# g(a) = 3$. Let $g(a) = \{ a_1, a_2, a_3\}$ (see figure below). Let $w_i$ be the weight assigned to $a_i$ so that $(w_1+w_2+w_3)=N$.

By continuity, the weights on the strands containing each of the $a_i$'s is constant over all regular points. Let $b=e^{2\pi i}$
be the only non-regular point
and let $g(b) = \{ b_1, b_2\}$. Let $u_i$ be the weight assigned to $b_i$
so that $(u_1+u_2)=N$. Then $u_1=w_1 + w_2$ and $u_2=w_2+w_3$. It follows that $w_2=0$, i.e. the weight assigned to $a_2$ is zero.

\begin{center}
\includegraphics[height=70mm]{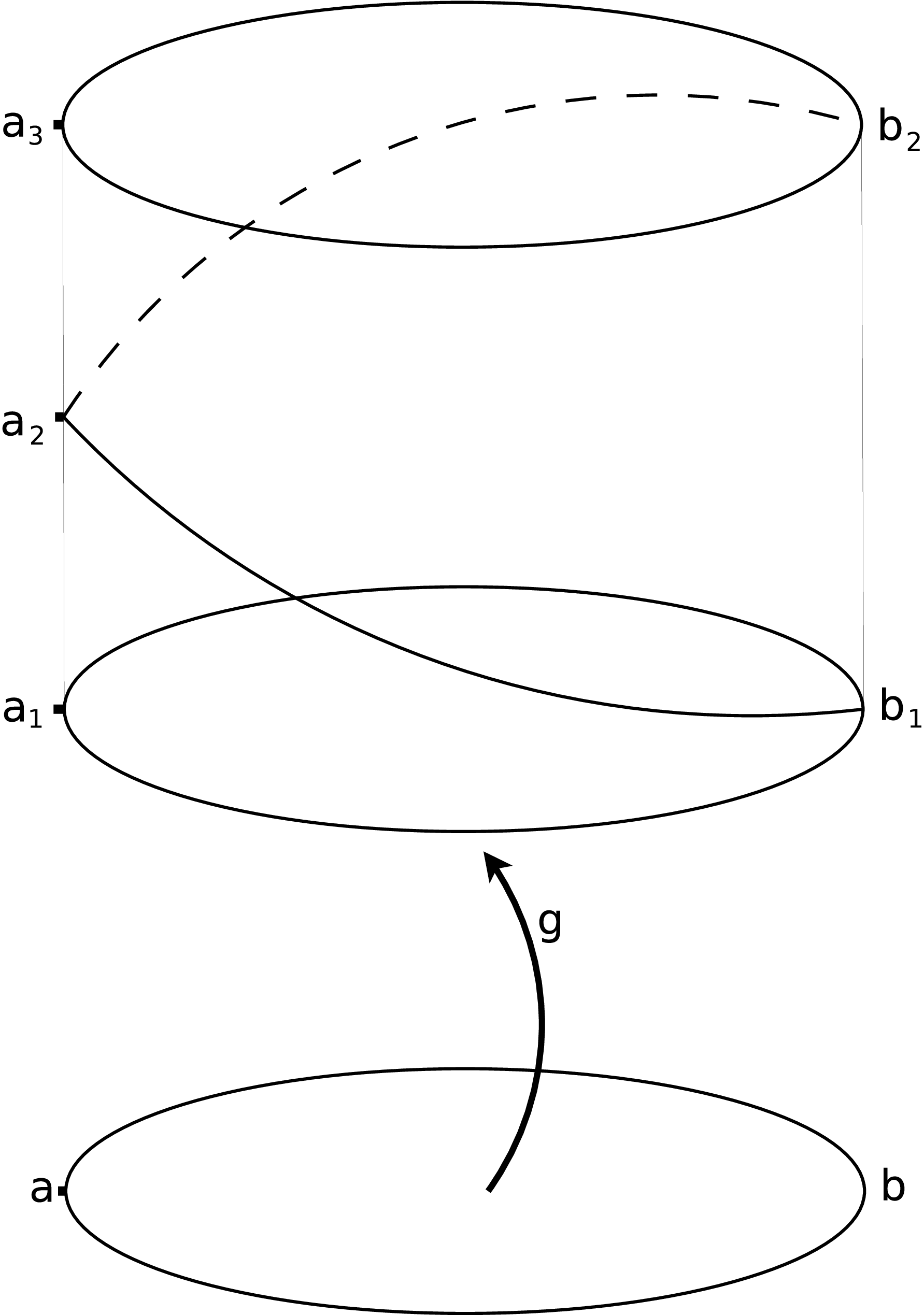}
\end{center}

\subsubsection{Scheme for Step 2:}\label{scheme}
We use this idea to arrange a counter-example for Proposition \ref{homocohomomap}. Note that the conclusion of Proposition \ref{homocohomomap} can be arrived at if the lift $\tilde{f}$ satisfies $q_k(\tilde{f}(x))\subset f(x)$ (that is, a subset of $f(x)$ can be assigned coherent weights). Therefore the above example will not suffice as the top and the bottom strands can be assigned arbitrary weights. 

The main idea behind the counterexample below is to {\it wedge three copies of the above picture together appropriately}. We want a strand arrangement over the disk so that the top and the bottom strands along one part of the disk gets associated to the middle strand along a different part of the disk. This will force that no subset can be assigned coherent weights.   

More precisely, wedge  three copies $S^1_1, S^1_2, S^1_3$ of the circle $S^1$ at the point $(0,0)$ inside the disk and fill up the disks $\DD^2_1,\DD^2_2,\DD^2_3$ inside as three petals. Choose an arbitrary
 circle $C$ in $\DD^2$ (to be identified with the {\it range/codomain} of the multifunction)
 and distinct 
points $a_1,a_2,a_3$ on  $C$. Along $\partial \DD^2_i$ (in the domain $ \DD^2_i$)
define the function $f_i$ as  above so that $f_i((0,0))=\{a_1,a_2, a_3\}$ is a regular point and $a_i$ corresponds to the middle strand. This can be done by choosing the path in the above example to be the path along $C$ going between the other two points passing through $a_i$ and parametrizing the angle $\theta$ appropriately. See diagram below, where $f_3$ is described.

\begin{center}
\includegraphics[height=90mm]{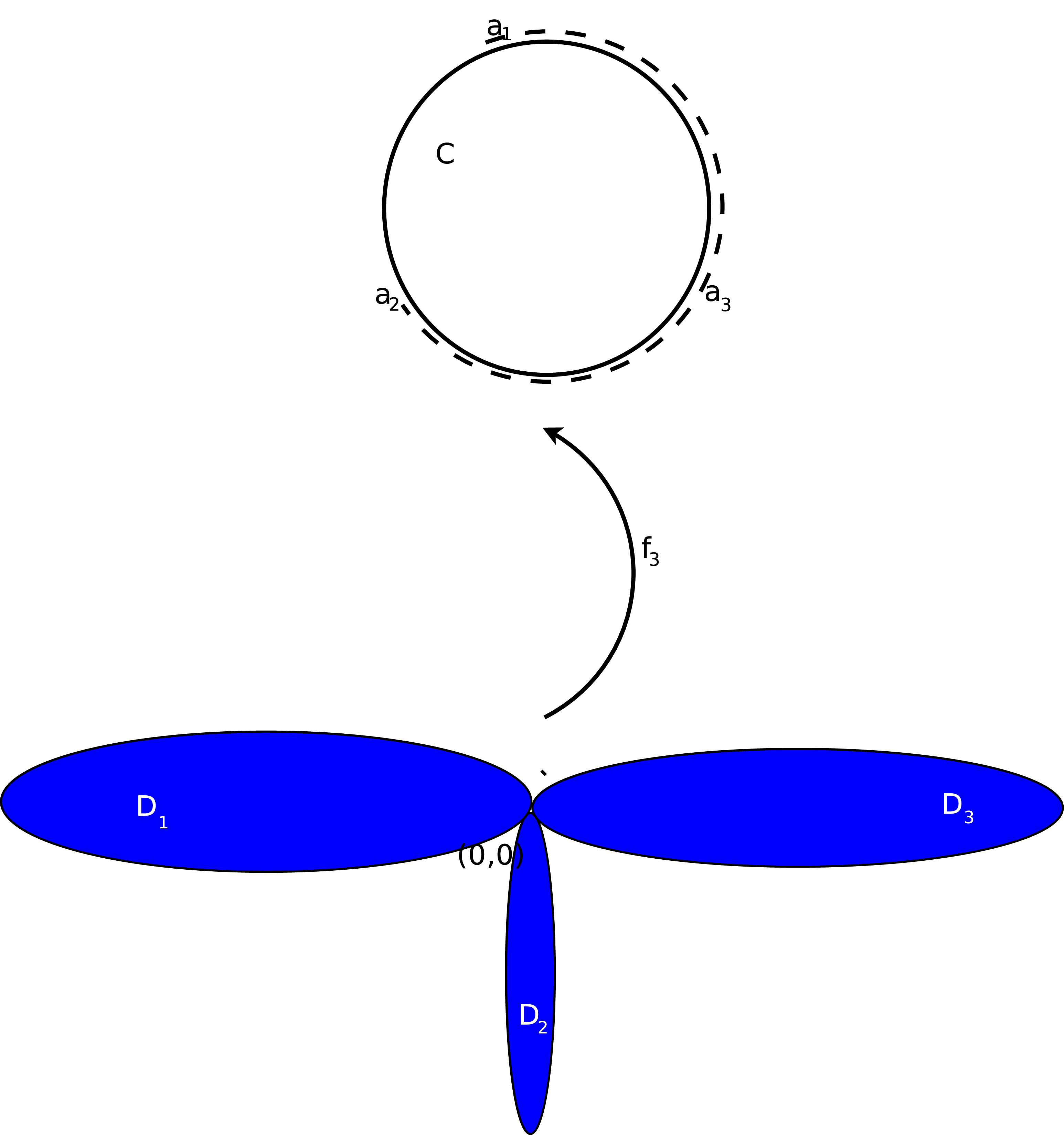}
\end{center}

Extend the function $f_i$ over all of $\DD^2_i$ by 
a "linear homotopy" (see Section \ref{linhom} below for details).
 Let $W= \DD^2_1\cup \DD^2_2 \cup \DD^2_3$.  Define the multi-function $g$ on $W$ by $g|_{\DD^2_i}=f_i$. Finally extend the multifunction $g$ to the whole disk
 $D$.  Since $a_i$ is forced to lie in the middle strand on $g(S^1_i)$, each weight $w_i$ is forced to be zero. In the following we make this construction formal, and show that this $gr(g)$ does not satisfy Proposition \ref{homocohomomap} by writing down an explicit CW-complex structure and computing the homology using the cellular chain complex.

\subsubsection{Extending by a Linear Homotopy:}\label{linhom}
Fix a path $P(t)$ in $\DD^2$ given by an embedding $P:[0,1]\ra \DD^2$ (for our purposes $P$ will be an arc of a circle, for instance the one indicated by a 
dashed line in the picture above). Define $f_P:\DD^2 \rightarrow Sub_3(\DD^2)$ as follows:\\   
On the ray from (0,0) to (1/2,0) define $f_P(t,0) = \lbrace P(0),P(2t) \rbrace, t \in [0, \frac{1}{2}]$, so that $f_P(0,0)=\lbrace P(0) \rbrace$. Now define $f_P$ on the circle of radius $t$ by 
$$f_P(tcos 2\pi a, t sin 2\pi a)= \lbrace P(0),P(2t), P(2at) \rbrace$$ 
We extend $f_P$ to $\DD^2$ by using a linear (outward) homotopy from the map $f_P$ on the circle of radius $\frac{1}{2}$ to the constant function $(0,0)$ on the circle of radius $1$, and putting this homotopy as the function on the outward rays. The extended function is also called $f_P$.
 Then $f_P$ is continuous, and there is no lift to $C_3(\DD^2)$ as any assignment of weights to the points forces the weight of the points $P(at)$ for $a\neq 0$ over $( tcos 2\pi a, t sin 2\pi a)$ to be zero (see argument above). 

Define a submultifunction $s(f)$ of the multifunction $f:\DD^n \to Sub_k(\DD^m)$ to be a function $s(f): \DD^n\to Sub_l(\DD^m)$ such that for all $x\in \DD^n$, $s(f)(x)\subset f(x)$. Note that in the above example one can pass to a continuous submultifunction $s(f)$ (leaving out the subset described by the middle strand in the picture) on which  weights can be defined. Hence a homological selection continues to exist
and in particular the conclusion of Proposition \ref{homocohomomap} (ensuring non-trivial maps in homology and cohomology)
 remain true: the submultifunction $s(f)$ defines a class  $\Lambda\in H_2(gr(s(f),\partial gr(s(f))$ so that $\pi_*(\Lambda)\neq 0$ and this $\Lambda$ is also supported on $(gr(f),\partial gr(f))$ as $gr(s(f))\subset gr(f)$.

\subsubsection{CW Complex Structure on $gr(f_P)$}
The topological space $gr(f_P)$ is the union of\\
a) a sphere $S^2$ with the north pole and south pole identified, and \\
b) a disk whose boundary is the wedge of the longitude and the meridian. \\

As a consequence we can write down a CW complex structure of $gr(f_P)$ (recall $\pi: gr(f_P)\rightarrow \DD^2$ is the projection onto the domain of the multifunction $f_P$). There are two 0-cells $v, \, w$ with $v$ as the pinched point (north pole = south pole = the point lying over $(0,0)$, that is, $\pi(v)=(0,0)$) and $w$ as the point on the boundary lying over the point $(1,0)$ ($\pi(w)=(1,0)$). Let $\gamma$ denote the straight line joining $(0,0)$ to $(1,0)$. There are three 1-cells $a_1,~ a_2,~\alpha$, where $\pi^{-1}(\gamma) = a_1\cup a_2 $, and  $\alpha=\pi^{-1}(\partial \DD^2)$. Note that $\DD^2-\gamma - \partial(\DD^2)$ consists only of regular points and hence $\pi^{-1}(\DD^2 - \gamma - \partial \DD^2)= e_1 \cup e_2 \cup e_3$ where $e_i$ are copies of the open $2$-disk. Their closures (also denoted by $e_i$) are three 2-cells. The boundaries in the cellular chain complex are given by: 
$$\partial_2 e_1 = \alpha + a_1 -a_1=\alpha,$$
$$ \partial_2 e_2 = \alpha + a_2 -a_2=\alpha,$$
$$ \partial_2 e_3 = \alpha + a_1 + a_2$$
and
$$\partial_1 a_1 = v - w ,$$
$$\partial_1 a_2=w-v$$ 
$$ \partial_1 \alpha = v- v=0.$$

\subsubsection{From $gr(f_P)$ to  $gr(g_C)$:}
The disk (centered at $(0,0)$) of radius $1/2$ in $\DD^2$ shall be referred to as the {\it half disk}.
We use the above $f_P$ {\it defined on the  half disk} to get a counterexample by wedging three similar maps at a regular point as stated in the proof-idea 
in Section \ref{scheme} (the regular point is labelled $a$  in the diagram below). Formally, we shall use three arcs of a circle as our paths. Fix any circle $C$ in the interior of the range $\DD^2$ (range of the multifunction $\DD^2\to Sub_3(\DD^2)$ to be defined). Define $g_C : \DD^2 \rightarrow Sub_3(\DD^2)$ as follows. Divide $C$ into three arcs $I_1,\, I_2,\, I_3$ by choosing three  points $A_1,A_2,A_3$ in order on it and defining $I_1=A_2A_3,~ I_2=A_3A_1,~ I_3=A_1A_2$. Define $P_i=C-I_i$ for $i=1,2,3$, therefore $P_i$ is a curve joining the two points $A_j$ ($j\ne i$) and passing through $A_i$ (see figure above). The orientation of $P_i$ is defined by starting at the end of $I_i$ and ending at the beginning (for example, $P_1$ starts at $A_3$ passes through $A_1$ and ends at $A_2$). In $\DD^2$ consider three smaller (topological embedded) disks $D_i \, (1\leq i \leq 3)$ in the interior wedged at the point $(0,0)$. One may think of this as a bouquet of petals as in the diagram below meeting at the point $a=(0,0)$. Fix an orientation preserving homeomorphism $\phi_i$ between $D_i$ and the half disk so that $f_{P_i}(\phi_i(0,0)) = \{A_1,A_2,A_3\}$. On the disk $D_i$ define $g_C$ as the function $f_{P_i}
\circ \phi_i $. On the complement of $\cup D_i$ we use a linear homotopy on $g_C$ to get 
the value $g_C(p)= (0,0)$ for $p\in \partial \DD^2$. We prove below that the boundary circle is a free generator for $H_1(gr(g_C); \mathbb{Z})$ thus violating the conclusion of Proposition \ref{homocohomomap}.

\medskip

The topological space $gr(g_C)$ can be obtained as an identification space from the spaces $G_i=gr(f_{P_i})$, $i=1,2,3$ (all the three $G_i$ are homeomorphic) as follows: \\
The fiber  over a regular point $a$ of  the disk $D_i$ consists of three inverse images (labeled $A_1,A_2,A_3$ in the diagram below).
The three sets 
of three inverse images of $a$ (corresponding to the three disks $D_1, D_2, D_3$)
are glued together via  cyclic permutations as indicated in  the diagram below.

\begin{center}
\includegraphics[height=90mm]{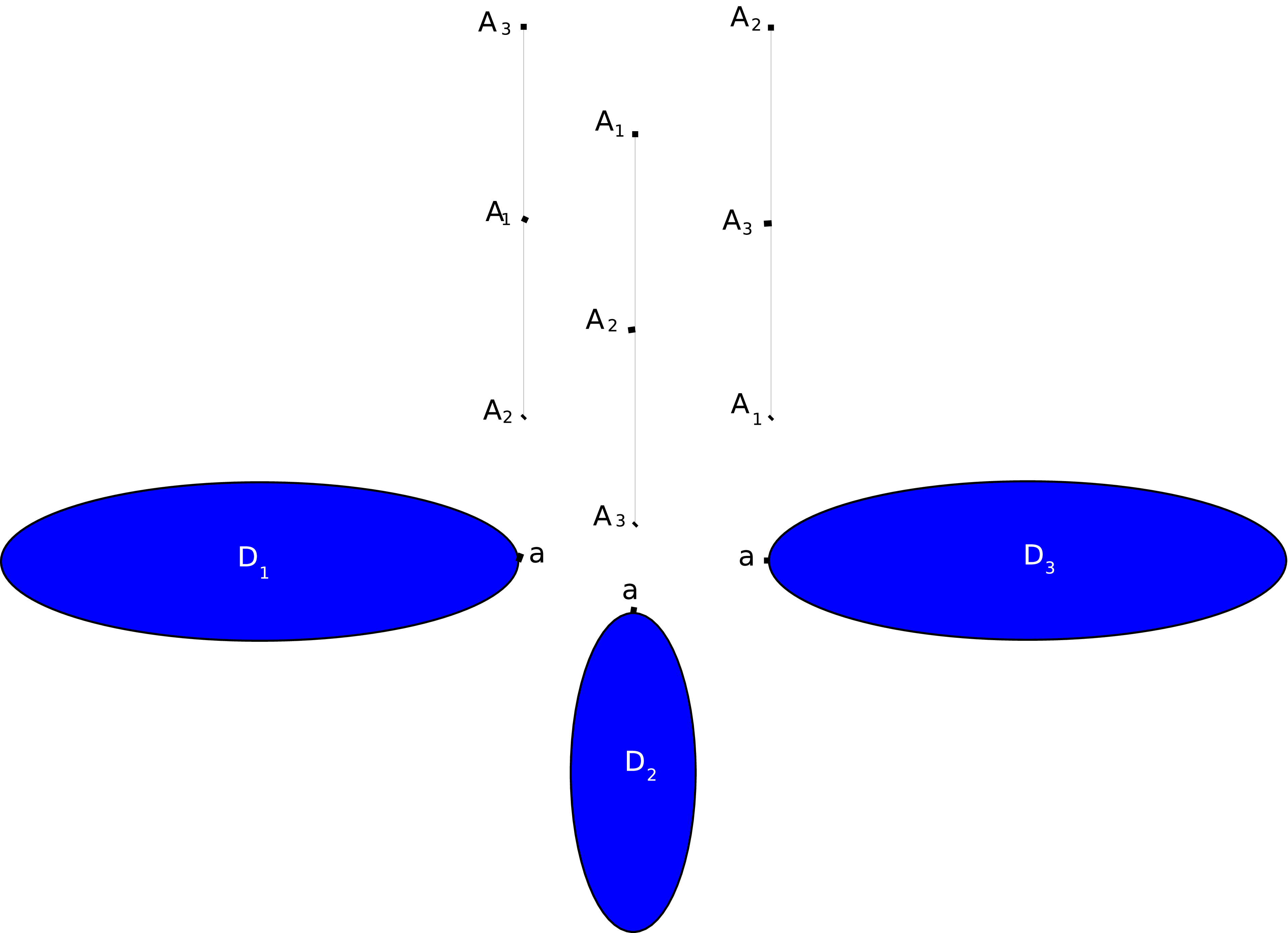}
\end{center} 

\subsubsection{CW Complex Structure on $gr(g_C)$}
From the above, we can write down a CW complex structure on $gr(g_C)$ as follows: \\
There are six 0-cells $v_i,\,w_i \, (i=1,2,3)$ corresponding to the notation in the case of $G_i$; There are nine 1-cells $a_1^i,\, a_2^i \,(i=1,2,3)$ (the same definition as before in the $G_i$) and $\alpha_{1,2},\, \alpha_{2,3}, \, \alpha_{3,1}$ where $\alpha_{i,j}$ joins $v_i$ to $v_j$ on the boundary; There are still three 2-cells (since connecting the disks only results in a bigger disk) $e_1,\,e_2,\,e_3$. 

\subsubsection{Cohomology of $gr(g_C)$} The boundary maps in cellular chain complex are given by 
the following (where   $\alpha_{1,2}+ \alpha_{2,3} + \alpha_{3,1}$ is denoted as $\alpha$ in the cellular chain complex):
 $$\partial_2 e_1 = \alpha + a_1^1 - a_1^1 + a_2^2 -a_2^2 + a_1^3 + a_2^3=\alpha+a_1^3+a_2^3,$$
 $$ \partial_2 e_2 = \alpha+a_1^1 + a_2^1 + a_1^2 - a_1^2 + a_2^3 - a_2^3=\alpha +a_1^1+a_2^1,$$ 
 $$\partial_2 e_3 =  \alpha+a_2^1 -a_2^1 + a_1^2 + a_2^2 + a_1^3 - a_1^3=\alpha + a_2^2+a_1^2.$$
and 
 $$\partial_1 a_j^i = (-1)^j (v^i - w^i) \,(j=1,2, \, i=1,2,3),$$
 $$ \partial_1 \alpha_{k,l} = v_k - v_l \,(k=1,2,3 , \, l=1,2,3).$$  
Now we can compute the CW homology of $gr(g_C)$. We obtain that $H_1(gr(g_C)) \cong \mathbb{Z}$ with generator $\alpha$ which is homologous to each 
 $a_1^i + a_2^i$. Note that $\alpha$ is also the generator of $H_1(\partial gr(g_C))\cong \Z$. Therefore, the image $\partial: H_2(gr(g_C),\partial gr(g_C))\to H_1(\partial gr(g_C))$ is $0$. We have the commutative diagram 
$$\xymatrix{  H_2(gr(g_C),\partial gr(g_C)) \ar[d]^{\pi_*} \ar[rr]^{\partial}  & & H_1(\partial gr(g_C)) \ar[d]^{\cong}\\ 
H_2(\DD^2,\partial \DD^2)  \ar[rr]^{\partial}_{\cong}  & & H_1(\partial \DD^2)  }$$
It follows that for any class $\Lambda\in H_2(gr(g_C),\partial gr(g_C))$, $\pi_*(\Lambda) \in H_2(\DD^2,\partial \DD^2)$ is null-homologous violating the conclusion of Proposition \ref{homocohomomap}, proving the counterexample.  $\Box$

\subsection{Counterexample to Theorem \ref{existNash}}
Theorem \ref{existNash} 
asserts the existence of Nash equilibria under the condition (\textbf{A2}). Using the 
above counter-example, 
we construct an example where  Nash equilibria do not exist. 
To do this we realize the graph
of any continuous function $f:\SM_{-i} \rightarrow Sub_N(\SM_i)$
as the set of minima  of a cost function. 

\begin{prop}\label{minsettoprofit} 
Let $f:\SM_{-i} \rightarrow Sub_N(\SM_i)$ be a continuous function. Then there is a function $F:\SM_{-i} \times \SM_i \rightarrow \mathbb{R}$ such that,
$$f(x)= \lbrace y\,|\, y\, is \, a \, local\, minimum\, of\, y\mapsto F(x,y)\rbrace .$$

\end{prop}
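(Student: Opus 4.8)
The plan is to realize $f(x)$ simultaneously as the zero set and as the local-minimum set of the distance-to-the-set function. Explicitly, I would set
$$F(x,y) \stackrel{def}{=} d\bigl(y,f(x)\bigr)=\min_{z\in f(x)} d(y,z),\qquad x\in\SM_{-i},\ y\in\SM_i,$$
where $d$ is the metric on $\SM_i$ and $f(x)$ is viewed as a (nonempty) finite subset of $\SM_i$. The guiding idea is that $F(x,\cdot)$ is nonnegative and vanishes exactly on $f(x)$, so the points of $f(x)$ are automatically its global minima; the real content is to rule out any other local minimum.

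The verification would proceed in two steps. First, for each $z\in f(x)$ one has $F(x,z)=0$ while $F(x,\cdot)\ge 0$ throughout $\SM_i$, so $z$ is a global, hence local, minimum of $y\mapsto F(x,y)$. Second, and this is where the geometry of the domain enters, I would show that no $y_0\notin f(x)$ is a local minimum. Writing $r=F(x,y_0)>0$ and choosing a nearest point $z\in f(x)$ with $d(y_0,z)=r$, I would invoke the convexity of the simplex $\SM_i$: the segment $y_t=(1-t)y_0+t z$ stays in $\SM_i$, and $F(x,y_t)\le d(y_t,z)=(1-t)r<r$ for all small $t>0$, so $F(x,\cdot)$ strictly decreases as one moves from $y_0$ toward $z$. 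Hence $y_0$ is not a local minimum, and combining the two steps yields $f(x)=\{y:\ y\text{ is a local minimum of }y\mapsto F(x,y)\}$, which is exactly the asserted identity.

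Finally I would record that, although the statement only requires a function, this $F$ is in fact (Lipschitz) continuous jointly in $(x,y)$: since $f$ is continuous into $\bigl(Sub_N(\SM_i),d_H\bigr)$ and the distance-to-a-set function obeys $|d(y,A)-d(y',A')|\le d(y,y')+d_H(A,A')$, continuity of $F$ follows immediately. The only genuine obstacle is excluding spurious local minima, both on the boundary $\partial\SM_i$ and along the equidistant loci between distinct points of $f(x)$; the convexity of $\SM_i$ is precisely what dissolves this difficulty, since it guarantees one can always strictly decrease $F$ by moving toward a nearest point of $f(x)$ without leaving the domain.
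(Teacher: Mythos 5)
Your proof is correct, but it is not quite the paper's argument, and the difference is instructive. The paper defines $F(x,y)$ to be the distance in the product space from the point $(x,y)$ to the compact set $gr(f)\subset \SM_{-i}\times\SM_i$, and simply observes that $F\ge 0$ vanishes exactly on $gr(f)$; you instead take the fiberwise distance $F(x,y)=d(y,f(x))$. Both choices make the points of $f(x)$ global, hence local, minima of $F(x,\cdot)$, but the two proofs diverge on the converse direction, which is exactly the step the statement (about \emph{local} minima) needs. Your segment argument --- slide $y_0$ straight toward a nearest point $z\in f(x)$, using convexity of the simplex $\SM_i$ and the norm identity $d(y_t,z)=(1-t)d(y_0,z)$ --- cleanly kills all spurious local minima, so your proof of the stated identity is complete. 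For the paper's $F$ this argument is not available verbatim: the nearest point of $gr(f)$ to $(x,y_0)$ may sit over some $x'\neq x$, and one is only allowed to move in the $y$-direction, so excluding positive local minima of the slice function $y\mapsto \mathrm{dist}\bigl((x,y),gr(f)\bigr)$ genuinely requires an extra argument exploiting the Hausdorff continuity of $f$; the paper omits this, and as written its one-line proof identifies only the \emph{global} minima of $F(x,\cdot)$ with $f(x)$ (a weaker reading it acknowledges in the remark following the proposition). The trade-off is minor: the paper's $F$ is automatically $1$-Lipschitz for an arbitrary closed graph, whereas yours is continuous only because $f$ is continuous into $\bigl(Sub_N(\SM_i),d_H\bigr)$ --- but $f$ is assumed continuous, so nothing is lost. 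In short, your route is a legitimate variant of the same distance-function idea and, on the local-versus-global minimum issue, is actually tighter than the paper's own proof.
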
 

\begin{proof}
Consider the graph $gr(f)$ of $f$. This is a closed subset of the compact set $\SM_{-i} \times \SM_i$ and 
is therefore, compact. We define $F$ to be the distance function to $gr(f)$ which is 
greater than zero if $(x,y) \notin gr(f)$ and $0$ if $(x,y) \in gr(f)$. Thus, the set of minima 
 of $F$ are exactly the elements of the set $gr(f)$. 
\end{proof}

The above proof also works if we replace local minima
by global minima.
 We now write down
the counterexample to the statement  analogous to Theorem \ref{existNash} obtained by
replacing assumption (\textbf{A2}) by (\textbf{A1}). For the example below,
the number of players $N =2$, $\SM_1$ and $\SM_2$ are two dimensional disks. We have to 
construct functions $f_1: \SM_1 \rightarrow Sub_k(\SM_2)$ and $f_2: \SM_2 \rightarrow 
Sub_l(\SM_1)$ such that $gr(f_1)$ and $gr(f_2)$ do not intersect. 

Define $f_1$ as the example $g_C$ above by using the same formula on the disks $D_i \, (i=1,2,3)$. Note that the disk $\DD^2$ deformation retracts to the space $W=D_1\cup D_2\cup D_3$. Let $r$ be the corresponding retraction. For $x\in \DD^2$, define $f_1(x)=g_C(r(x))$. Therefore, $Im(f_1)\subset Sub_3(C)\subset Sub_3(\DD^2)$.  This defines uniquely a continuous function $f_1: \SM_1 \rightarrow Sub_3(\SM_2)$. 

Next, we shall choose $f_2$ judiciously so that $gr(f_2)$ does not intersect $gr(f_1)$. Since $Im(f_1)$ is contained in $Sub_3(C)$, any intersection value must have its second coordinate in the circle $C$. It is therefore enough to make the judicious choice over $C$ and take any extension. Such extensions always exist since the inclusion of the circle $C$ is a neighborhood deformation retract and $Sub_3(\SM_1)$ is contractible.

Divide the circle $C$ into three arcs $\alpha_1,\, \alpha_2,\, \alpha_3$ where $\alpha_1 $
is the arc joining the mid-point of $A_3A_1$ to the
mid-point of $A_1A_2$ and passing through $A_1$ (and similarly $\alpha_2, \alpha_3$). See figure below where $\alpha_1$ is indicated by the dashed line.
Define $f_2$ on the circle $C$ to be the 
{\it single valued} function mapping $\alpha_i$ to $\partial D_i\subset \DD^2$ such that \\
a) the map is an orientation preserving (homeomorphic) embedding in the interior of each $\alpha_i$, \\
b) the boundary points of $\alpha_i$ are both mapped to $(0,0)$, and\\
c) the point $A_i$ on $\alpha_i$ is mapped to the point diametrically opposite to $(0,0)$ in $\partial D_i$.\\

In the figure below, the image of $\alpha_1$ under $f_2$ is indicated by a bold line.

\begin{center}
\includegraphics[height=90mm]{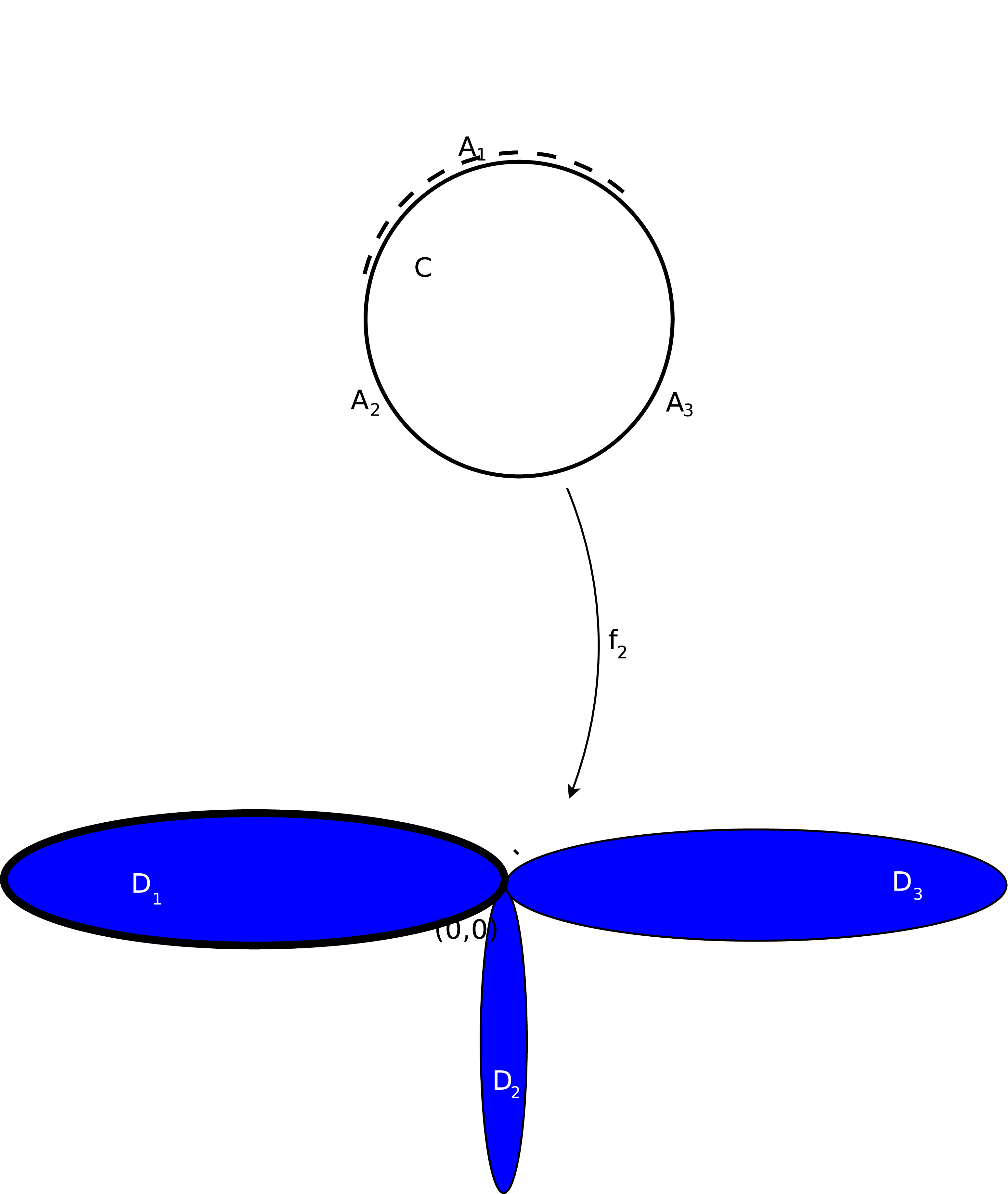}
\end{center}

Recall that
this point on $\partial D_1$ (resp.  $\partial D_2$,  $\partial D_3$) is mapped by $f_1$ to $\lbrace A_2,A_3\rbrace$(resp. $\lbrace A_1,A_3\rbrace$, $\lbrace A_1, A_2\rbrace$).
 We extend $f_2$ to $\SM_2$ by the recipe given in the previous paragraph. 

We wish to prove that $gr(f_1) \cap gr(f_2) = \emptyset$. Note that the contrary would imply that there is an $x \in \SM_2$ with $x\in f_1(f_2(x))$ because $f_2$ is single valued on $C$ and $Im(f_1)\subset Sub_3(C)$. The point $x$ cannot belong to the boundary of the $\alpha_i$ because then $f_1(f_2(x))=\lbrace A_1,A_2,A_3\rbrace$ and none of the $A_j$ lie on the boundary of any $\alpha_i$. 

If $x$ is in the interior of $\alpha_1-\{A_1\}$ then $f_1(f_2(x))$ has the form $\lbrace A_2, A_3, y(x)\rbrace$, where $y(x)$ is a function of $x$ that is continuous except at the point $A_1\in \alpha_1$. As the point $x$ traverses from the mid-point of $A_3A_1$ towards $A_1$ the point $y(x)$ starts at $A_1$ and moves along the arc $A_1A_2$ towards $A_2$. We have $f_1(f_2(A_1))=\{A_2,A_3\}$. As $x$ move from $A_1$ towards the midpoint of $A_1A_2$, the point $y(x)$ starts close to $A_3$ and moves towards $A_1$.  Therefore  if $x\in A_3A_1 \cap \alpha_1$, $y(x)\in A_1A_2$ and if $x\in A_1 A_2\cap \alpha_1$, then $y(x)\in A_3A_1$. Therefore, $x\notin f_1(f_2(x))$. 

The same proof works for $x$ in the interior of $\alpha_2$ and $\alpha_3$. Therefore, $f_1$, $f_2$ as above give cost functions $F_1$, $F_2$ by Proposition \ref{minsettoprofit} such that the game defined by the 
 cost functions $F_i$ for player $i$ ($i=1,2$) does not have a Nash equilibrium.   {\it  Thus Theorem \ref{existNash}
does not hold if    assumption (\textbf{A2}) is replaced by (\textbf{A1}).} This completes the counterexample. $\Box$

\medskip

\noindent {\bf Acknowledgments:} The third author would like to thank Sasha Dranishnikov for telling us the proof
idea of
 Lemma \ref{Dold}.

\end{document}